\documentclass[twocolumn]{autart}

\usepackage{graphicx}    
\usepackage{subcaption}
\usepackage{amsmath}     
\usepackage{amssymb}     
\usepackage{url}
\usepackage{enumitem}
\usepackage{xcolor}

\usepackage{comment}
\usepackage{algorithm}
\usepackage{algpseudocode}
\allowdisplaybreaks[2]

\algnewcommand{\StateCont}[1]{\Statex\hspace{\algorithmicindent}#1}

\begin{document}

\begin{frontmatter}

\title{Decentralized Contingency MPC based on Safe Sets for Nonlinear Multi-agent Collision Avoidance}

\author[Inst1]{Max Studt$^\ast$}\ead{m.studt@uni-luebeck.de}, \
\author[Inst1]{Georg Schildbach}\ead{georg.schildbach@uni-luebeck.de}
\address[Inst1]{Institute for Electrical Engineering in Medicine, University of Luebeck, Luebeck, Germany}

\begin{keyword}
Decentralized MPC; Contingency MPC; Nonlinear systems; Multi-agent systems; Collision avoidance;
Recursive feasibility; Safe sets; Lyapunov constraint
\end{keyword}

\begin{abstract}
Decentralized collision avoidance remains challenging, particularly when agents do not communicate any information related to planned trajectories. Most existing approaches either rely on conservative coordination mechanisms or provide limited guarantees on recursive feasibility and convergence. This paper develops a decentralized contingency MPC framework for multi-agent systems with nonlinear dynamics that achieves collision-free motion under a state-only information pattern. Each agent follows the same consensual rule set, enabling safe decentralized planning without communication. Each agent solves a local optimization problem that couples a nominal trajectory with a contingency certificate ensuring a feasible backup maneuver under receding-horizon operation. A novel geometric and decentralized safe-set update mechanism prevents feasibility loss between consecutive time steps. The resulting scheme guarantees recursive feasibility, including collision avoidance, and establishes a Lyapunov-type convergence result to an admissible safe equilibrium. Simulation results demonstrate performance in both sparse and dense multi-agent environments, including cluttered bottleneck scenarios and under plug-and-play operation.
\end{abstract}

\end{frontmatter}

\theoremstyle{plain}
\newtheorem{theorem}{Theorem}
\newtheorem{lemma}{Lemma}
\newtheorem{proposition}{Proposition}

\theoremstyle{definition}
\newtheorem{assumption}{Assumption}
\newtheorem{remark}{Remark}
\newtheorem{definition}{Definition}
\newtheorem{corollary}{Corollary}
\newenvironment{proof}
  {\par\noindent\textbf{\textit{Proof: }}\itshape}
  {\hfill$\square$\par}
\section{Introduction}

Multi-agent navigation and control problems require \emph{hard} collision avoidance while simultaneously achieving mission objectives such as reference tracking.
Model Predictive Control (MPC) provides a natural framework for this setting, since constraints can be handled explicitly while optimizing performance over a receding horizon~\cite{Mayne2000}.
\subsection{Literature overview}
\subsubsection*{Centralized planning}
A first class of approaches enforces collision avoidance in a \emph{centralized} manner by solving a coupled optimization problem for the entire fleet. Such formulations can encode global constraints directly, but typically scale poorly and require a coordinator with access to global information. Mixed-integer linear programs (MILPs) are a prominent example, enabling explicit collision-avoidance logic at the price of combinatorial complexity~\cite{Schouwenaars2001ECC,RichardsSchouwenaarsHowFeron2002}. Receding-horizon MILP/MPC variants have also been demonstrated in aerospace guidance applications, highlighting both the potential and computational burden of centralized approaches~\cite{SchouwenaarsValentiFeronHow2005}.

\subsubsection*{Distributed / decentralized MPC with coordination}
To improve scalability, many distributed or decentralized MPC schemes let agents solve local problems while coordinating to handle coupling constraints~\cite{Scattolini2009,Negenborn2009}. This coordination may rely on repeated communication rounds~\cite{CommunicationRounds}, negotiation of predicted trajectories~\cite{Negotiation}, consensus/agreement protocols~\cite{Lunze2022}, or distributed optimization methods such as ADMM/ALADIN~\cite{Aladin}. Related non-iterative approaches exchange compact descriptions of neighbor behavior, e.g., contracts of guaranteed future coupling-variable trajectories, which can support recursive feasibility and stability while limiting communication to local neighbors~\cite{LUCIA2015205}. However, such schemes still rely on communication and may introduce latency or coordination assumptions that are undesirable in safety-critical settings~\cite{Stewart2010,KOHLER20191,Venkat2007}.

An intermediate regime considers limited or time-varying communication, including bounded ranges, changing neighbor sets, and plug-and-play operation. Recent MPC formulations address this by reasoning about topology changes over the prediction horizon, for example via multi-trajectory MPC schemes that couple a nominal trajectory with a worst-case topology-change trajectory~\cite{Saccani2023}. More broadly, plug-and-play concepts have been studied in networked control and MPC for systems with changing components~\cite{Stoustrup2009,Riverso2013,LUCIA2015205}.

Several MPC-based approaches further reduce coordination effort toward single exchanges or local sensing. Examples include decentralized cooperative and reactive MPC-style navigation in unknown environments~\cite{HoyMatveevSavkin2012}, multi-robot NMPC with feasibility and stability analyses~\cite{LafmejaniBerman2021}, and decentralized deadlock-prevention layers that trigger coordination maneuvers in dense settings~\cite{MerryGoRound}. 

\subsubsection*{Decentralized approaches without communication}
In purely decentralized settings, agents observe only the current multi-agent configuration and cannot exchange planned trajectories or intentions. Geometric and reactive methods such as velocity obstacles~\cite{FioriniShiller1998} and ORCA~\cite{vanDenBergORCA2011} are computationally attractive and can provide collision avoidance under specific modeling assumptions. However, they are typically not designed to incorporate multi-step performance objectives, hard state/input constraints, and MPC-style closed-loop guarantees.

\subsubsection*{Safety filters and barrier-function based safety}
Optimization-based safety filters and control barrier functions enforce safety by rendering a safe set forward invariant via online correction problems~\cite{Ames2019}. Predictive safety filters connect this idea to MPC by embedding constraint satisfaction into a predictive structure~\cite{Wabersich2021}. However, combining multi-step planning, collision avoidance, and state-only decentralized information patterns with strong guarantees such as recursive feasibility and convergence remains challenging.

\subsection{Contributions}
To bridge these gaps, this paper adopts a \emph{contingency MPC} perspective. Contingency MPC couples a nominal plan with a safety or backup plan, thereby certifying the existence of an emergency maneuver under receding-horizon implementation~\cite{Alsterda2019}. A related communicationless multi-agent collision-avoidance setting has been addressed for \emph{linear} systems using pairwise bisecting-plane separation constraints and a deterministic backup evolution~\cite{Georg}. In contrast, this paper develops a decentralized contingency MPC framework for \emph{nonlinear} multi-agent systems under a state-only information pattern. The proposed method combines a local dual-plan formulation with a novel \textit{freeze-or-shift} (\textit{FoS}) safe-set update that preserves disjointness of active local safe regions over time. The resulting scheme provides tractable guarantees on recursive feasibility, collision avoidance, and Lyapunov-type convergence.

The main contributions of this paper are:
\begin{itemize}
  \item[C1] A fully decentralized \textit{FoS} update rule for time-varying convex local safe regions that preserves disjointness of the active safe sets.
  \item[C2] A decentralized contingency MPC formulation for nonlinear multi-agent systems that couples a nominal performance plan with a backup safety certificate via a shared first input.
  \item[C3] A constructive recursive-feasibility and collision-avoidance proof
based on shifted contingency candidates and invariant disjoint safe sets.
  \item[C4] A Lyapunov-type convergence result obtained by enforcing a monotone decrease condition on a contingency-related cost bound.
  \item[C5] A simulation study demonstrating effectiveness in sparse, dense, cluttered bottleneck, and plug-and-play multi-agent scenarios.
\end{itemize}

\section{Preliminaries and problem statement}
\label{sec:prelim}

\subsection{Notation}
\label{subsec:notation}
The sets of real and integer numbers are denoted by $\mathbb{R}$ and $\mathbb{Z}$, respectively. Moreover, $\mathbb{R}_+$ and $\mathbb{Z}_+$ denote the sets of nonnegative real and nonnegative integer numbers, respectively, and $\mathbb{Z}_{>0}$ denotes the set of positive integers. For integers $a,b\in\mathbb{Z}$ with $a\le b$, the (inclusive) index set is defined as
\(
\mathbb{Z}_a^b := \{k\in\mathbb{Z}\mid a \le k \le b\}.
\)
The set of agents is
\(
\mathcal{I} := \{1,\dots,M\}.
\)
Discrete time is indexed by $t\in\mathbb{Z}_+$ with sampling time $T_s>0$, while continuous time is denoted by $\tau\in\mathbb{R}_+$.
The next discrete time instant is denoted by $t^+ := t+1$ and the previous one by $t^- := t-1$.
For a vector $x\in\mathbb{R}^n$, $\|x\|$ denotes the Euclidean norm.
Let $n_p \in \mathbb{Z}_{>0}$ denote the dimension of the position space. For $c\in\mathbb{R}^{n_p}$ and $R\ge 0$, the closed Euclidean ball centered at $c$ with radius $R$ is defined as
\(
\mathbb{B}(c,R) := \{p\in\mathbb{R}^{n_p} : \|p-c\| \le R\}.
\)
The MPC prediction index $(k|t)$ is used throughout: $x_{i,(k|t)}$ denotes the prediction of $x_i(t+k)$ for agent $i$ computed at time $t$.
An equilibrium of agent $i$ is denoted by $(\bar x_i,\bar u_i)$ and satisfies
\(
\bar x_i = f_i(\bar x_i,\bar u_i).
\)
When needed, the associated equilibrium position is denoted by $\bar p_i := C_i \bar x_i$.
For the contingency plan, terminal safe equilibria are denoted by $(\bar x_i^{\mathrm c}, \bar u_i^{\mathrm c})$.

\subsection{Agent Dynamics and Constraints}
\label{subsec:dynamics}
Let \(\mathcal W \subseteq \mathbb{R}^{n_p}\) denote the workspace. For each agent \(i \in \mathcal{I}\), define the
body-feasible position set
\[
\mathcal W_i := \{p_i(t) \in \mathbb{R}^{n_p} : \mathbb{B}(p_i(t),r_i) \subseteq \mathcal W\}.
\]
Thus, \(p_i(t) \in \mathcal W_i\) means that the full body of agent \(i\), modeled as a
closed Euclidean ball of radius \(r_i\) centered at \(p_i(t)\), is contained in the workspace.
If no workspace-boundary constraints are present, one may simply take
\(\mathcal W= \mathcal W_i =\mathbb{R}^{n_p}\).

Each agent \(i \in \mathcal{I}\) is modeled by a discrete-time nonlinear
system
\begin{equation}
x_i(t^+) = f_i(x_i(t),u_i(t)),
\label{eq:agent_dynamics}
\end{equation}
with state \(x_i(t) \in \mathbb{R}^{n_i}\) and input
\(u_i(t) \in \mathbb{R}^{m_i}\). The position of agent \(i\) is given by
\(
p_i(t) = C_i x_i(t) \in \mathbb{R}^{n_p},
\)
where \(C_i \in \mathbb{R}^{n_p \times n_i}\) extracts the position
component of the state.

The sets of \emph{admissible states} and \emph{admissible inputs} for agent \(i\) are denoted by
\(
\mathcal X_i \subset \mathbb{R}^{n_i}, \ \mathcal U_i \subset \mathbb{R}^{m_i}.
\)
The closed-loop system trajectory is required to satisfy, for all $t \in \mathbb{Z}_+$
\(
x_i(t) \in \mathcal X_i, \ u_i(t) \in \mathcal U_i.
\)
It is assumed that the state constraints are chosen such that $p_i(t) \in \mathcal W_i$ whenever $x_i(t) \in \mathcal X_i$.

Each agent \(i\) is assigned an individual global reference state
\(
x_i^{\mathrm{ref}} \in \mathbb{R}^{n_i},
\)
with corresponding reference position
\(
p_i^{\mathrm{ref}} = C_i x_i^{\mathrm{ref}} \in \mathcal W_i.
\)
Throughout the paper, \(x_i^{\mathrm{ref}}\) is assumed to denote an
\emph{admissible equilibrium state} for agent \(i\), i.e., there exists an
admissible input \(u_i^{\mathrm{ref}} \in \mathcal U_i\) such that
\[
x_i^{\mathrm{ref}}
=
f_i(x_i^{\mathrm{ref}},u_i^{\mathrm{ref}}).
\]
\subsection{Collision Model and Safety Objective}
\label{subsec:collision}
Let 
\[
\mathcal B_i(t) := \mathbb{B}(p_i(t),r_i).
\]
denote the closed Euclidean ball occupied by agent $i$. A collision between two distinct agents $i\neq j$ at time $t$ occurs if
\[
\mathcal B_i(t)\cap \mathcal B_j(t)\neq\emptyset,
\]
equivalently if $\|p_i(t)-p_j(t)\| < r_i+r_j$.
The safety objective is collision-free closed-loop execution,
\[
\mathcal B_i(t)\cap \mathcal B_j(t)=\emptyset, \qquad \forall i\neq j,\ \forall t\in\mathbb{Z}_+,
\]
while driving each agent towards its reference or, if necessary, towards a safe admissible equilibrium.

\begin{assumption}[Exact nominal dynamics]
\label{ass:exact_dynamics}
The closed-loop evolution of each agent is exactly described by~\eqref{eq:agent_dynamics} under the input and state constraints $u_i(t)\in\mathcal U_i$ and $x_i(t)\in\mathcal X_i$, for all $t\in\mathbb{Z}_+$. In particular, no model mismatch and no further uncertainties are considered.
\end{assumption}

\begin{assumption}[Position invariance]
\label{ass:position_invariance}
The dynamics~\eqref{eq:agent_dynamics} are translation invariant. More precisely, for every agent $i$, there exists an embedding $\phi_i:\mathbb{R}^{n_p}\to\mathbb{R}^{n_i}$ such that $C_i\phi_i(\delta)=\delta$ and
\[
f_i(x_i+\phi_i(\delta),u_i)=f_i(x_i,u_i)+\phi_i(\delta)
\]
for all admissible $(x_i,u_i)$ and all $\delta\in\mathbb{R}^{n_p}$.
\end{assumption}

\begin{assumption}[Information pattern]
\label{ass:information_pattern}
At each time $t\in\mathbb{Z}_+$, agent $i\in\mathcal I$ can measure its own state $x_i(t)$ and the current states $x_j(t)$ of all other agents $j\in\mathcal I\setminus\{i\}$. The reference positions (or objectives) of other agents and their future inputs or predicted trajectories are not available.
\end{assumption}

\section{Local safe sets}
\label{sec:local_safe_sets}

This section introduces the geometric object used to enforce decentralized safety, namely the active local safe set associated with each agent. The active safe sets are chosen such that, from the current state of each agent, there exists a feasible backup maneuver that remains inside the set, reaches an admissible safe equilibrium in finite steps, and can subsequently be maintained there indefinitely.

At each time $t\in\mathbb{Z}_+$, each agent $i\in\mathcal{I}$ is associated with an active and time-varying local safe set
\begin{equation}
S_i^\ast(t) := \mathbb{B}\big(c_i(t),R_i(t)\big)\subseteq\mathbb{R}^{n_p},
\label{eq:local_safe_set}
\end{equation}
where $c_i(t)\in\mathbb{R}^{n_p}$ is the center and $R_i(t)\ge 0$ is the radius.
The active pair \((c_i(t),R_i(t))\) is determined by a deterministic
update rule based on a state-dependent safe-set generator. More
precisely, for each agent \(i\in\mathcal I\), let
\[
\Gamma_i:\mathcal X_i \rightarrow
\left\{
\mathbb B(c,R)\subseteq\mathbb R^{n_p}
\,:\,
c\in\mathbb R^{n_p},\ R\ge 0
\right\}
\]
denote a deterministic map that assigns to every admissible state
\(x_i\in\mathcal X_i\) a generated local safe set. For a given state
\(x_i\), this generated safe set is written as
\[
\Gamma_i(x_i)
=
\mathbb B\big(c_i(x_i),R_i(x_i)\big).
\]
The active safe set \(S_i^\ast(t)\) used by the controller need not
coincide with the generated set \(\Gamma_i(x_i(t))\). Instead, the
generated set may either be accepted or rejected by the \textit{FoS}
update rule introduced in Section~\ref{sec:FreezeandshiftOperator}.
Thus, reconstructing active safe sets generally requires the current
state information together with one-step memory of the previously
active safe-set parameters.
Independent of this update rule, the footprint of the agent must satisfy
\begin{equation}
\mathcal{B}_i(t)\subseteq S_i^\ast(t),\qquad \forall t\in\mathbb{Z}_+.
\label{eq:footprint_containment}
\end{equation}

\begin{assumption}[Local reconstructability]
\label{ass:local_reconstructability}
The deterministic safe-set generation and update rules are commonly
known to all agents. During closed-loop operation, each agent
\(i\in\mathcal I\) can reconstruct the active safe set \(S_j^\ast(t)\)
of every other agent \(j\in\mathcal I\setminus\{i\}\) from the observed
state \(x_j(t)\) and one-step memory of the previously active safe-set
parameters \(S_j^\ast(t^-)\), without communication of planned
trajectories or control inputs.
\end{assumption}

\begin{assumption}[Contingency recoverability]
\label{ass:contingency_recoverability}
For every agent $i\in\mathcal{I}$ and every time $t\in\mathbb{Z}_+$, the active safe set $S_i^\ast(t)$ is chosen such that there exists a feasible contingency maneuver starting from $x_i(t)$ that remains inside $S_i^\ast(t)$, satisfies all state and input constraints, and reaches an admissible safe equilibrium in finite time.
\end{assumption}

\begin{definition}[Admissible safe equilibrium]
\label{def:terminal_safe_equilibrium}
For every agent $i\in\mathcal{I}$ and every time $t\in\mathbb{Z}_+$ the state-input pair $(\bar x_i,\bar u_i)$ satisfying
\[
\bar x_i = f_i(\bar x_i,\bar u_i),
\qquad
\bar p_i := C_i\bar x_i \in S_i^\ast(t),
\]
and
\[
\mathbb{B}\big(\bar p_i,r_i\big)\subseteq S_i^\ast(t)\cap\mathcal{W}_i,
\]
represents an admissible safe equilibrium.
Moreover, once this equilibrium is reached, the agent can be kept inside $S_i^\ast(t)$ for all future times by applying the constant equilibrium input $\bar u_i$.
\end{definition}

Together, Assumption~\ref{ass:contingency_recoverability} and Definition~\ref{def:terminal_safe_equilibrium} formalize the intended role of the active safe set: It must support both finite-time recovery and indefinite safe holding. In particular, the active safe set must be chosen such that it contains a reachable admissible safe equilibrium and the agent can remain there indefinitely once this equilibrium has been reached.

\begin{remark}[Non-equilibrium-based terminal set]
\label{rem:terminal_invariant_set_extension}
The present paper considers systems for which terminal safety can be represented by an admissible safe equilibrium contained in the active safe set. For systems that do not admit such a fixed equilibrium, e.g., certain aircraft models, the same conceptual role could instead be played by a compact positively invariant terminal safe set $X_i^{\mathrm f}(t)\subseteq S_i^\ast(t)$ that is reachable in finite time and can be rendered invariant by a backup policy. 
\end{remark}

\begin{remark}[Model-dependent realization]
\label{rem:model_dependent_realization}
The concrete realization of Assumption~\ref{ass:contingency_recoverability} depends on the agent model. For some systems, local safe sets can be obtained from closed-form stopping bounds; for more general nonlinear systems, they may be constructed by reachable-set over-approximations, invariant-set arguments, or other conservative backup designs.
\end{remark}
\section{Decentralized contingency MPC formulation}
\label{sec:dcmpc}

This section states the decentralized contingency MPC problem solved by each agent $i\in\mathcal{I}$ at time $t\in\mathbb{Z}_+$.
At each time step, a single optimization simultaneously computes
(i) a nominal trajectory for performance and reference tracking and
(ii) a contingency trajectory that guarantees the existence of a feasible
backup maneuver within the local safe set.
The two plans are coupled by enforcing a shared first control input.

\subsection{Decision variables and horizons}
\label{subsec:variables_horizons}
Two prediction horizons are introduced: a nominal horizon $N_{\mathrm{n}}\in\mathbb{Z}_+$
and a contingency horizon $N_{\mathrm{c}}\in\mathbb{Z}_+$.
The nominal horizon $N_{\mathrm{n}}$ is chosen to shape tracking performance.
The contingency horizon $N_{\mathrm{c}}$ is selected sufficiently large such that a feasible contingency maneuver to an admissible safe equilibrium can be represented within the active local safe set.
Its choice depends on the agent dynamics, input and state constraints, and the construction of the local safe sets.

At time $t$, agent $i$ optimizes the following state--input sequences:

\textbf{Nominal trajectory:}
\[
X_i^\mathrm n(t) = \{x^{\mathrm{n}}_{i,(k|t)}\}_{k=0}^{N_{\mathrm{n}}}, \quad
U_i^\mathrm n(t) = \{u^{\mathrm{n}}_{i,(k|t)}\}_{k=0}^{N_{\mathrm{n}}-1}.
\]
\textbf{Contingency trajectory:}
\[
X_i^\mathrm c(t) = \{x^{\mathrm{c}}_{i,(k|t)}\}_{k=0}^{N_{\mathrm{c}}}, \quad
U_i^\mathrm c(t) = \{u^{\mathrm{c}}_{i,(k|t)}\}_{k=0}^{N_{\mathrm{c}}-1}.
\]
Each state $x^{\bullet}_{i,(k|t)} \in \mathbb{R}^{n_i}$ is associated with a position
\(
p^{\bullet}_{i,(k|t)} = C_i x^{\bullet}_{i,(k|t)} \in \mathbb{R}^{n_p},
\)
where $\bullet \in \{\mathrm{n},\mathrm{c}\}$ distinguishes nominal and contingency trajectories. In addition, the optimization includes the \emph{terminal contingency equilibrium pair} as a decision variable,
\begin{equation}
\bar{x}^{\mathrm{c}}_i(t) \in \mathcal X_i, \quad 
\bar{u}^{\mathrm{c}}_i(t) \in\mathcal{U}_i,  
\end{equation}
satisfying
\(
\bar{x}^{\mathrm{c}}_i(t) = f_i\big(\bar{x}^{\mathrm{c}}_i(t), \bar{u}^{\mathrm{c}}_i(t)\big).
\)
The closed-loop system evolves according to \eqref{eq:agent_dynamics} with the applied input given by the shared first move,
\begin{equation}
u_i(t) = u^{\mathrm{n},*}_{i,(0|t)} = u^{\mathrm{c},*}_{i,(0|t)},
\label{eq:shared_input_applied}
\end{equation}
where $(\cdot)^\ast$ denotes an optimal solution of the MPC problem at time $t$.

\subsection{Objective function}
\label{subsec:dcmpc_objective}
The local objective is kept generic, as the theoretical guarantees derived later rely on
structural properties rather than on a specific (e.g., quadratic) choice.
A typical design penalizes deviations of the nominal trajectory from a desired reference and regularizes the selection of the contingency equilibrium.

Let $\ell_i^{\mathrm{n}}(\cdot,\cdot)$ denote a tracking-related stage cost and
$V_i^{\mathrm{n}}(\cdot)$ a terminal tracking cost. Moreover, let
$V_i^{\mathrm{c}}(\bar{x}^{\mathrm{c}}_i(t), x_i^{\mathrm{ref}})$ denote an offset cost that measures
the distance of the contingency equilibrium to the desired target state.
\begin{equation}
\begin{aligned}
J_i\big(X_i^{\mathrm{n}}(t), &U_i^{\mathrm{n}}(t), \bar{x}^{\mathrm{c}}_i(t), x_i^{\mathrm{ref}}\big)
:= \\
\sum_{k=0}^{N_{\mathrm{n}}-1}
&\ell_i^{\mathrm{n}}\!\big(x^{\mathrm{n}}_{i,(k|t)}, u^{\mathrm{n}}_{i,(k|t)}\big)
\;+\;
V_i^{\mathrm{n}}\!\big(x^{\mathrm{n}}_{i,(N_{\mathrm{n}}|t)}, x_i^{\mathrm{ref}}\big) \\
&\;+\;
\gamma\,V_i^{\mathrm{c}}\!\big(\bar{x}^{\mathrm{c}}_i(t), x_i^{\mathrm{ref}}\big).
\end{aligned}
\label{eq:generic_objective}
\end{equation}
The scalar $\gamma>0$ weights the preference for contingency equilibria that are close to the desired reference. Note that the cost term related to the selected terminal contingency equilibrium state is required in order to enforce a Lyapunov-like decrease
through an additional constraint (Section~\ref{subsec:dcmpc_convergence_constraint}).

\begin{definition}[Optimal contingency equilibrium]
\label{def:opt_reachable_equilibrium}
At each time step $t\in\mathbb{Z}_+$, an \emph{optimal reachable contingency equilibrium} minimizes the equilibrium-offset cost, i.e.,
\[
(\bar x_i^{\mathrm c,\ast}(t),\bar u_i^{\mathrm c,\ast}(t))
\in
\arg\min_{(\bar x_i^{\mathrm c},\bar u_i^{\mathrm c})\in\mathcal Z_i^{\mathrm c}(t)}
V_i^{\mathrm c}\!\big(\bar x_i^{\mathrm c},x_i^{\mathrm{ref}}\big),
\]
where $\mathcal Z_i^{\mathrm c}(t)$ denotes the set of admissible terminal contingency equilibrium pairs.
\end{definition}
\begin{remark}[Role of the equilibrium-offset weight]
The scalar $\gamma>0$ regularizes the selection of the contingency equilibrium. In related multi-trajectory MPC formulations, it can be shown that for sufficiently large equilibrium-offset weights, the equilibrium selected by the optimizer can be made arbitrarily close to the offset-minimizing admissible equilibrium ~\cite[Lemma~1]{Saccani2023}.
\end{remark}

\subsection{Constraints}
\label{subsec:dcmpc_constraints}
Both predicted trajectories start from the measured state,
\begin{equation}
x^{\mathrm{n}}_{i,(0|t)} = x^{\mathrm{c}}_{i,(0|t)} = x_i(t).
\label{eq:init_both}
\end{equation}
For the nominal trajectory, the dynamics \eqref{eq:agent_dynamics} are enforced for
$k=0,\ldots,N_{\mathrm{n}}-1$,
\[
x^{\mathrm{n}}_{i,(k+1|t)} = f_i\big(x^{\mathrm{n}}_{i,(k|t)}, u^{\mathrm{n}}_{i,(k|t)}\big),
\]
and analogously for the contingency trajectory for $k=0,\ldots,N_{\mathrm{c}}-1$,
\[
x^{\mathrm{c}}_{i,(k+1|t)} = f_i\big(x^{\mathrm{c}}_{i,(k|t)}, u^{\mathrm{c}}_{i,(k|t)}\big).
\]
Stagewise state and input constraints are imposed by
\[
x^{\mathrm{n}}_{i,(k+1|t)}\in\mathcal{X}_i,\ \ u^{\mathrm{n}}_{i,(k|t)}\in\mathcal{U}_i,
\qquad \forall k\in\mathbb{Z}_0^{N_{\mathrm{n}}-1},
\]
\[
x^{\mathrm{c}}_{i,(k+1|t)}\in\mathcal{X}_i,\ \ u^{\mathrm{c}}_{i,(k|t)}\in\mathcal{U}_i,
\qquad \forall k\in\mathbb{Z}_0^{N_{\mathrm{c}}-1}.
\]

To ensure that the predicted contingency trajectory represents a valid backup plan, the terminal equilibrium constraint
\begin{equation}
x^{\mathrm{c}}_{i,(N_{\mathrm{c}}|t)} = \bar{x}^{\mathrm{c}}_i(t),
\qquad
\bar{x}^{\mathrm{c}}_i(t) = f_i\big(\bar{x}^{\mathrm{c}}_i(t), \bar{u}^{\mathrm{c}}_i(t)\big),
\label{eq:terminal_steady_state}
\end{equation}
is imposed.
The terminal position is required to satisfy
\[
\bar{p}^{\mathrm{c}}_i(t) := C_i\bar{x}^{\mathrm{c}}_i(t)\in S_i^\ast(t).
\]
Moreover, the terminal equilibrium must be reachable from the current state $x_i(t)$ within $N_{\mathrm{c}}$ steps by a feasible contingency trajectory that remains inside the active local safe set, i.e.,
\(
\bar{x}_i^c(t) \in \mathrm{reach}_{N_c}(x_i(t)).
\)

\subsubsection{Local Safe-Set Constraints}

The entire contingency position plan is required to remain inside the current active local safe set \(S_i^\ast(t)\). To account for the agent size, the constraint is imposed with a radius margin \(r_i\), i.e.,
\begin{equation}
\|p^{\mathrm{c}}_{i,(k|t)} - c_i(t)\| \le R_i(t) - r_i,
\qquad \forall k\in\mathbb{Z}_0^{N_{\mathrm{c}}}.
\label{eq:contingency_plan_in_active_safe_set}
\end{equation}
Equivalently, \(\mathbb{B}\bigl(p^{\mathrm{c}}_{i,(k|t)},r_i\bigr)\subseteq S_i^\ast(t)\) for all
\(k\in\mathbb{Z}_0^{N_{\mathrm{c}}}\).

Furthermore, for every prediction step \(k\), the remainder of the
contingency trajectory (from step \(k\) onward) is required to remain
inside the local safe set generated by \(\Gamma_i\) from the predicted
contingency state \(x^{\mathrm c}_{i,(k|t)}\). Let this generated safe set
be given by
\[
\Gamma_i\bigl(x^{\mathrm c}_{i,(k|t)}\bigr)
=
\mathbb B\bigl(
c_i(x^{\mathrm c}_{i,(k|t)}),
R_i(x^{\mathrm c}_{i,(k|t)})
\bigr).
\]
Then the tail-containment constraint is
\begin{equation}
\begin{aligned}
\|p^{\mathrm c}_{i,(l|t)} - c_i(x^{\mathrm c}_{i,(k|t)})\|
\le
R_i(x^{\mathrm c}_{i,(k|t)})-r_i, \\
\qquad
\forall k\in\mathbb Z_0^{N_{\mathrm c}},
\ \forall l\in\mathbb Z_k^{N_{\mathrm c}}.
\label{eq:tail_containment_constraint}
\end{aligned}
\end{equation}
Constraint~\eqref{eq:tail_containment_constraint} enforces a receding-horizon recoverability property:
At each prediction stage \(k\), the planned tail
\[
\left\{x^{\mathrm{c}}_{i,(l|t)}\right\}_{l=k}^{N_{\mathrm{c}}}
\]
remains entirely inside the candidate local safe set generated from the predicted state \(x^{\mathrm{c}}_{i,(k|t)}\).
In particular, after applying the first input, the shifted tail
\[
\left\{x^{\mathrm{c}}_{i,(l+1|t)}\right\}_{l=0}^{N_{\mathrm{c}}-1}
\]
is contained in the candidate safe set generated from the new initial condition
\(
x_i(t^+) = x^{\mathrm{c}}_{i,(1|t)}.
\)
Hence, if the active safe set at the next time step is chosen as this generated candidate safe set, the shifted contingency tail remains admissible with respect to the newly induced active safe set.
This property is a key ingredient for establishing recursive feasibility under receding-horizon implementation, including the case where the active safe set is updated by the \textit{FoS} rule.

Figure~\ref{fig:TailContainmentInfeasibility} illustrates why constraint~\eqref{eq:tail_containment_constraint} is necessary.
Even if the local problem is feasible at time $t$ with active safe set $S_i^\ast(t)$, recursive feasibility may be lost after applying the first input if the remaining contingency tail is not guaranteed to lie inside the safe set induced at the successor state.
\begin{figure}[h]
    \centering
    \includegraphics[width=0.4\textwidth]{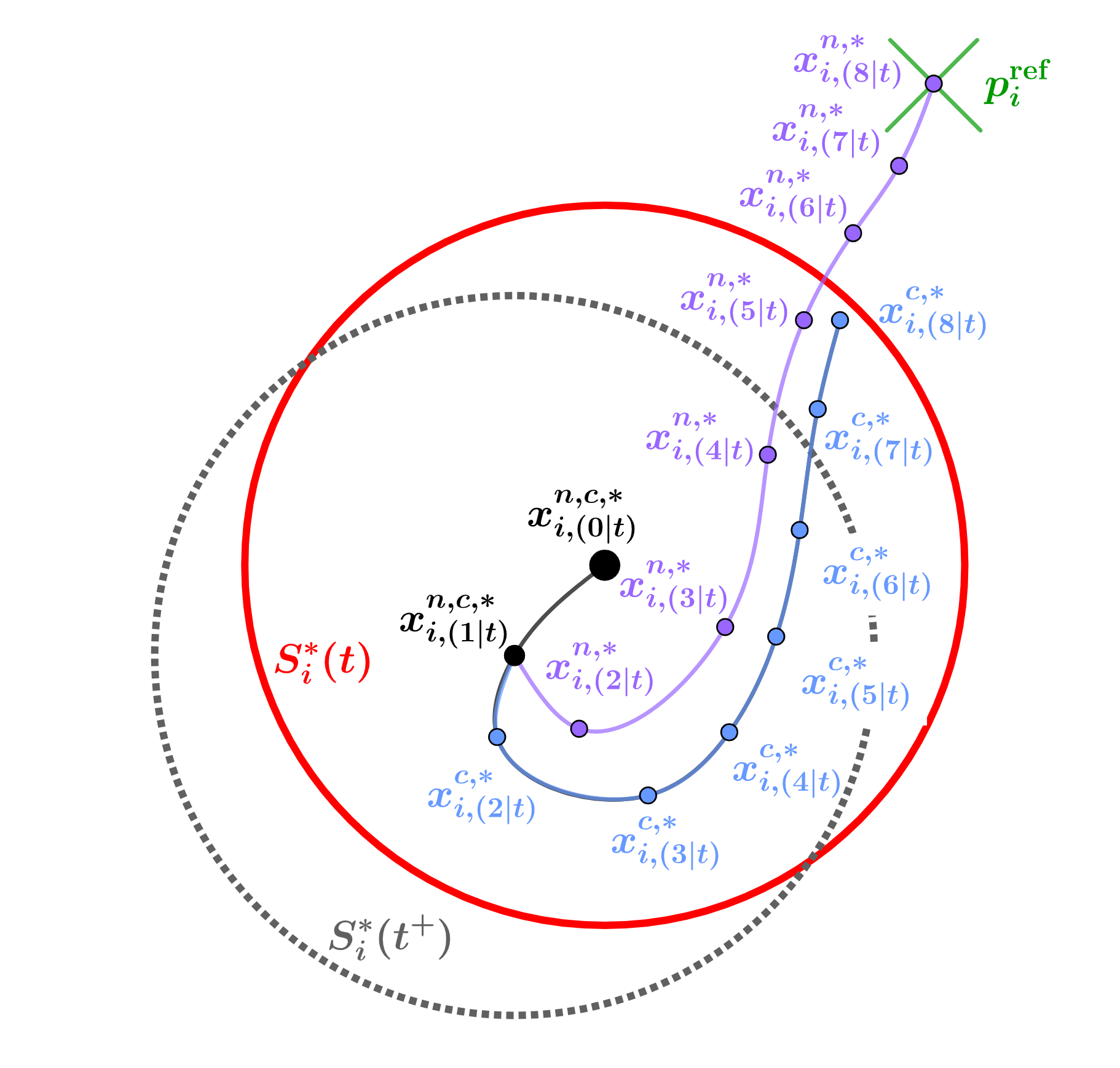}
    \caption{Loss of recursive feasibility without constraint \eqref{eq:tail_containment_constraint}. Although the local MPC problem is feasible at time $t$ with active safe set $S^\ast(t)$ (red), the naive update to $t^+$ yields a set $S^\ast(t^+)$ (dashed) that no longer contains the complete tail of the contingency plan (blue) from previous time step. This violates the safe-set containment constraints and renders the next-step problems infeasible.}
    \label{fig:TailContainmentInfeasibility}
\end{figure}

\subsubsection{Collision avoidance w.r.t.\ static obstacles}
\label{subsec:CollisionAvoidance}
Collision avoidance with respect to static obstacles is enforced explicitly along the
contingency trajectory. Let $\mathcal{O}\subset\mathbb{R}^{n_p}$ denote the obstacle set (union of closed obstacle
regions) and define a continuous obstacle-avoidance function
$h:\mathbb{R}^{n_p}\rightarrow\mathbb{R}$ such that
\[
h(p_i(t))\ge 0 \ \Longleftrightarrow\ \mathcal{B}_i\cap \mathcal{O} = \emptyset.
\]
Then the following hard constraints are imposed for all predicted contingency positions:
\begin{equation}
h(p^{\mathrm{c}}_{i,(k|t)}) \ge 0,
\qquad \forall\, i\in\mathcal{I},\ \forall\, k\in\mathbb{Z}_0^{N_{\mathrm{c}}}.
\label{eq:static_obstacle_constraints}
\end{equation}

\subsubsection{Convergence constraint}
\label{subsec:dcmpc_convergence_constraint}
Following the contingency MPC rationale, a decrease in a contingency-cost functional is enforced through an inequality constraint.

Let $\ell_i^{\mathrm{c}}(\cdot,\cdot)$ denote a continuous, nonnegative contingency stage cost, and define
\begin{equation}
\begin{aligned}
J_i^{\mathrm{c}}(t)
:=
\sum_{k=0}^{N_{\mathrm{c}}-1}
&\ell_i^{\mathrm{c}}\!\big(x^{\mathrm{c}}_{i,(k|t)}-\bar{x}^{\mathrm{c}}_i(t),\,u^{\mathrm{c}}_{i,(k|t)}-\bar{u}^{\mathrm{c}}_i(t)\big) \\
&\;+\;
V_i^{\mathrm{c}}\!\big(\bar{x}^{\mathrm{c}}_i(t), x_i^{\mathrm{ref}}\big).
\end{aligned}
\label{eq:c_cost_def}
\end{equation}
A bound $\hat{J}_i^{\mathrm{c}}(t)\in\mathbb{R}_+$ is maintained recursively and the following constraint is imposed:
\begin{equation}
J_i^{\mathrm{c}}(t) \le \hat{J}_i^{\mathrm{c}}(t).
\label{eq:c_cost_decrease_constraint}
\end{equation}
The update law for $\hat{J}_i^{\mathrm{c}}(t)$ is constructed from the shifted tail of the previously optimal contingency trajectory and yields the one-step bound
\begin{equation}
\hat J_i^{\mathrm c}(t^+)
:=
J_i^{\mathrm c,\ast}(t)
-
\ell_i^{\mathrm{c}}\!\big(x_i(t)-\bar{x}_i^{\mathrm{c},\ast}(t),\,u_i(t)-\bar{u}_i^{\mathrm{c},\ast}(t)\big),
\label{eq:lyap_bound_update_main}
\end{equation}
where $x_i(t) = x_{i,(0|t)}^{c,\ast}$ and $u_i(t)=u^{\mathrm c,\ast}_{i,(0|t)}$ is the applied shared first input. The feasibility of this shifted-tail bound and its role in the convergence proof are established in the Appendix.

\begin{remark}[Equilibrium]
\label{rem:tv_ss_case_split}
Since the optimizer may select a different terminal contingency equilibrium state $\bar{x}^c_i(t)$ at different time steps, two situations can occur.

\begin{enumerate}
\item[(i)] \emph{Reference becomes reachable within the active safe set.}
There exists a time instant $\bar{t}\in\mathbb{Z}_+$ such that the global reference is feasible as a
contingency terminal state thereafter, i.e.,
\[
\exists\,\bar{t}\in\mathbb{Z}_+:\quad
\bar{x}^{c}_i(t)=x_i^{\mathrm{ref}} \qquad \forall t\ge \bar{t},
\]
In this case the selected terminal equilibrium  is constant for all $t\ge\bar{t}$ and the Lyapunov-type analysis
yields convergence to the reference state.

\item[(ii)] \emph{Reference is not reachable due to persistent blocking.}
There exists a time instant $\bar{t}\in\mathbb{Z}_+$ such that the reference cannot be selected as a feasible terminal state, and the optimal reachable terminal state remains constant thereafter, i.e.,
\[
\exists\,\bar{t}\in\mathbb{Z}_+,\ \exists\,\bar{x}^{c,\star}_i:\quad
\bar{x}^{c}_i(t)=\bar{x}^{c,\star}_i \qquad \forall t\ge \bar{t}.
\]
Then the convergence constraint implies convergence to the fixed terminal state $\bar{x}^{c,\star}_i$.

\end{enumerate}

If $\bar{x}^c_i(t)$ changes infinitely often, the constraint \eqref{eq:c_cost_decrease_constraint}
still enforces a monotonic decrease of the contingency-cost bound generated by the shifted-tail update,
but a standard ``convergence to a fixed equilibrium'' statement is no longer immediate. In this case,
the constraint is interpreted as enforcing progress towards the \emph{currently selected} contingency
terminal state and providing a stabilizing regularization of the contingency plan.
\end{remark}

\subsection{Local finite-horizon optimal control problem}
\label{subsec:dcmpc_problem_statement}
For each agent $i\in\mathcal{I}$ and time $t\in\mathbb{Z}_+$, a local finite-horizon optimal control problem (FHOCP) is solved:

\refstepcounter{equation}\label{prob:local_fhocp}\addtocounter{equation}{-1}
\begin{align}
& \min_{X_i^{\mathrm{n,c}},\,U_i^{\mathrm{n,c}}\,\bar{x}_i^{\mathrm{c}},\,\bar{u}_i^{\mathrm{c}}}\
  J_i\big(X_i^{\mathrm{n}}(t), U_i^{\mathrm{n}}(t), \bar{x}^{\mathrm{c}}_i(t), x_i^{\mathrm{ref}}\big) \tag{15a}
  \label{prob:local_fhocp_obj}
\\
& \text{subject to:} \notag
\\
& x^{\mathrm{n}}_{i,(0|t)} = x^{\mathrm{c}}_{i,(0|t)} = x_i(t) \tag{15b}
  \label{prob:local_fhocp_init}
\\
& u^{\mathrm{n}}_{i,(0|t)} = u^{\mathrm{c}}_{i,(0|t)} \tag{15c}
  \label{prob:local_fhocp_shared}
\\
& x^{\mathrm{n}}_{i,(k+1|t)} = f_i \big(x^{\mathrm{n}}_{i,(k|t)}, u^{\mathrm{n}}_{i,(k|t)}\big),
  \forall k\in\mathbb{Z}_0^{N_{\mathrm{n}}-1} \tag{15d}
  \label{prob:local_fhocp_dyn_n}
\\
& x^{\mathrm{c}}_{i,(k+1|t)} = f_i \big(x^{\mathrm{c}}_{i,(k|t)}, u^{\mathrm{c}}_{i,(k|t)}\big),
  \forall k\in\mathbb{Z}_0^{N_{\mathrm{c}}-1} \tag{15e}
  \label{prob:local_fhocp_dyn_c}
\\
& x^{\mathrm{n}}_{i,(k+1|t)}\in\mathcal{X}_i,\ \ u^{\mathrm{n}}_{i,(k|t)}\in\mathcal{U}_i,
  \forall k\in\mathbb{Z}_0^{N_{\mathrm{n}}-1} \tag{15f}
  \label{prob:local_fhocp_bounds_n}
\\
& x^{\mathrm{c}}_{i,(k+1|t)}\in\mathcal{X}_i,\ \ u^{\mathrm{c}}_{i,(k|t)}\in\mathcal{U}_i,
  \forall k\in\mathbb{Z}_0^{N_{\mathrm{c}}-1} \tag{15g}
  \label{prob:local_fhocp_bounds_c}
\\
& h \bigl(p^{\mathrm{c}}_{i,(k|t)}\bigr) \ge 0,
  \forall k\in\mathbb{Z}_0^{N_{\mathrm{c}}} \tag{15h}
  \label{prob:static_obstacle_constraintsMPC}
\\
& x^{\mathrm{c}}_{i,(N_{\mathrm{c}}|t)} = \bar{x}^{\mathrm{c}}_i(t) \tag{15i}
  \label{prob:local_fhocp_terminal_state}
\\
& \bar{x}^{\mathrm{c}}_i(t)
= f_i \bigl(\bar{x}^{\mathrm{c}}_i(t),\bar{u}^{\mathrm{c}}_i(t)\bigr),
\ \bigl(\bar{x}^{\mathrm{c}}_i(t), 
\bar{u}^{\mathrm{c}}_i(t)\bigr)\in \mathcal{X}_i \times \mathcal{U}_i
\tag{15j}
\label{prob:local_fhocp_terminal_equilibrium}
\\
& C_i\bar{x}^{\mathrm{c}}_i(t)\in S_i^\ast(t) \tag{15k}
  \label{prob:local_fhocp_terminal_position}
\\
& \|p^{\mathrm{c}}_{i,(k|t)} - c_i(t)\| \le R_i(t) - r_i,
  \forall k\in\mathbb{Z}_0^{N_{\mathrm{c}}} \tag{15l}
  \label{prob:local_fhocp_containment}
\\
& \|p^{\mathrm{c}}_{i,(l|t)} - c_i(x^{\mathrm{c}}_{i,(k|t)})\| \leq R_i(x^{\mathrm{c}}_{i,(k|t)}) - r_i, \tag{15m}
  \label{prob:local_fhocp_tail}
\\
& \qquad \qquad \qquad \forall k\in\mathbb{Z}_0^{N_{\mathrm{c}}},\ \forall l\in\mathbb{Z}_k^{N_{\mathrm{c}}} \notag
\\
& J_i^{\mathrm{c}}(t) \le \hat{J}_i^{\mathrm{c}}(t) \tag{15n}
  \label{prob:local_fhocp_lyap}
\end{align}
\setcounter{equation}{15}

Problem~\eqref{prob:local_fhocp} is, generally, a non-convex Nonlinear Program (NLP), even if the dynamics are linear. It is solved in receding-horizon fashion. The applied input is the shared first input
\eqref{prob:local_fhocp_shared}.

\begin{remark}[Performance and safety]
No explicit inter-agent collision-avoidance constraints are imposed on the nominal prediction
\(X_i^\mathrm n\). Safety is ensured by the contingency trajectory
\(X_i^\mathrm c\) via
\eqref{prob:local_fhocp_containment}--\eqref{prob:local_fhocp_tail}. The nominal part can therefore be used flexibly for performance shaping, e.g., through additional objectives or soft constraints; the theoretical guarantees rely only on the contingency-plan feasibility structure.
\end{remark}

\section{\textit{FoS} update of the local safe sets}
\label{sec:FreezeandshiftOperator}

Recall that each agent $i$ is associated with an active local safe set as described in~\eqref{eq:local_safe_set}.
After solving the local FHOCPs at time \(t\) and applying the
shared first input, the closed-loop dynamics of each agent follow
\eqref{eq:agent_dynamics}.
The successor state of agent \(i\) is denoted by
\(
x_i(t^+) \in \mathcal X_i.
\)
Recall the deterministic state-dependent safe-set generator
\(\Gamma_i\) introduced in Section~\ref{sec:local_safe_sets}.
Given the successor state \(x_i(t^+)\), the candidate safe set for the
next time step is defined as
\begin{equation}
\tilde S_i(t^+)
:=
\Gamma_i\bigl(x_i(t^+)\bigr)
=
\mathbb{B}\bigl(\tilde c_i(t^+), \tilde R_i(t^+)\bigr),
\label{eq:candidate_safe_set}
\end{equation}
where \(\tilde c_i(t^+):=c_i(x_i(t^+))\) and \(\tilde R_i(t^+):=R_i(x_i(t^+))\). At time \(t\), agent \(i\) checks whether updating its active safe set
to the candidate \(\tilde S_i(t^+)\) would create an overlap with
either (i) the candidate safe set \(\tilde S_j(t^+)\) of another agent
\(j\neq i\), or (ii) the currently active safe set \(S_j^\ast(t)\) of
another agent \(j\neq i\). Accordingly, the \emph{freeze indicator} ($\chi$) for agent $i$ at time $t$ is defined as
\begin{equation}
\label{eq:freezeIndicator}
\chi_i(t)
:=
\begin{cases}
1 \ \text{if} & \exists j\in\mathcal{I}\setminus\{i\}: 
\begin{aligned}[t]
&\tilde{S}_i(t^+)\cap \tilde{S}_j(t^+)\neq\emptyset \\
\ \ &\text{or}\ \tilde{S}_i(t^+)\cap S_j^\ast(t)\neq\emptyset,
\end{aligned}
\\[0.2em]
0 & \text{otherwise}.
\end{cases}
\end{equation}
Since all local safe sets are Euclidean balls, the above condition is equivalent to the distance tests
\begin{align}
    \exists j \neq i:\ 
    &\|\tilde{c}_i(t^+) - \tilde{c}_j(t^+)\|
      < \tilde{R}_i(t^+) + \tilde{R}_j(t^+)
      \nonumber\\
      &\quad \text{or}\ 
      \|\tilde{c}_i(t^+) - c_j(t)\|
      < \tilde{R}_i(t^+) + R_j(t).
\label{eq:freeze_distance_test}
\end{align}
The \emph{FoS} update rule for the active local safe sets is then
defined as
\begin{equation}
\label{eq:freeze_shift_update}
(c_i(t^+), R_i(t^+))
:=
\begin{cases}
\big(c_i(t), R_i(t)\big), &\text{if } \chi_i(t)=1,\\[0.2em]
\big(\tilde{c}_i(t^+), \tilde{R}_i(t^+)\big), &\text{if } \chi_i(t)=0.
\end{cases}
\end{equation}
Equivalently,
\[
S_i^\ast(t^+)
=
\begin{cases}
S_i^\ast(t), &\text{if } \chi_i(t)=1,\\[0.2em]
\tilde S_i(t^+), &\text{if } \chi_i(t)=0.
\end{cases}
\]
\begin{remark}[Decentralized update]
\label{rem:decentralized_update}
By Assumption \ref{ass:information_pattern} and the deterministic construction of the candidate safe sets, each agent can locally reconstruct $S_j^\ast(t)$ and $\tilde S_j(t^+)$ for all $j\neq i$ from the current state and one-step memory of the previously active safe sets.
\end{remark}

\begin{figure}[h]
    \centering
    \includegraphics[width=0.45\textwidth]{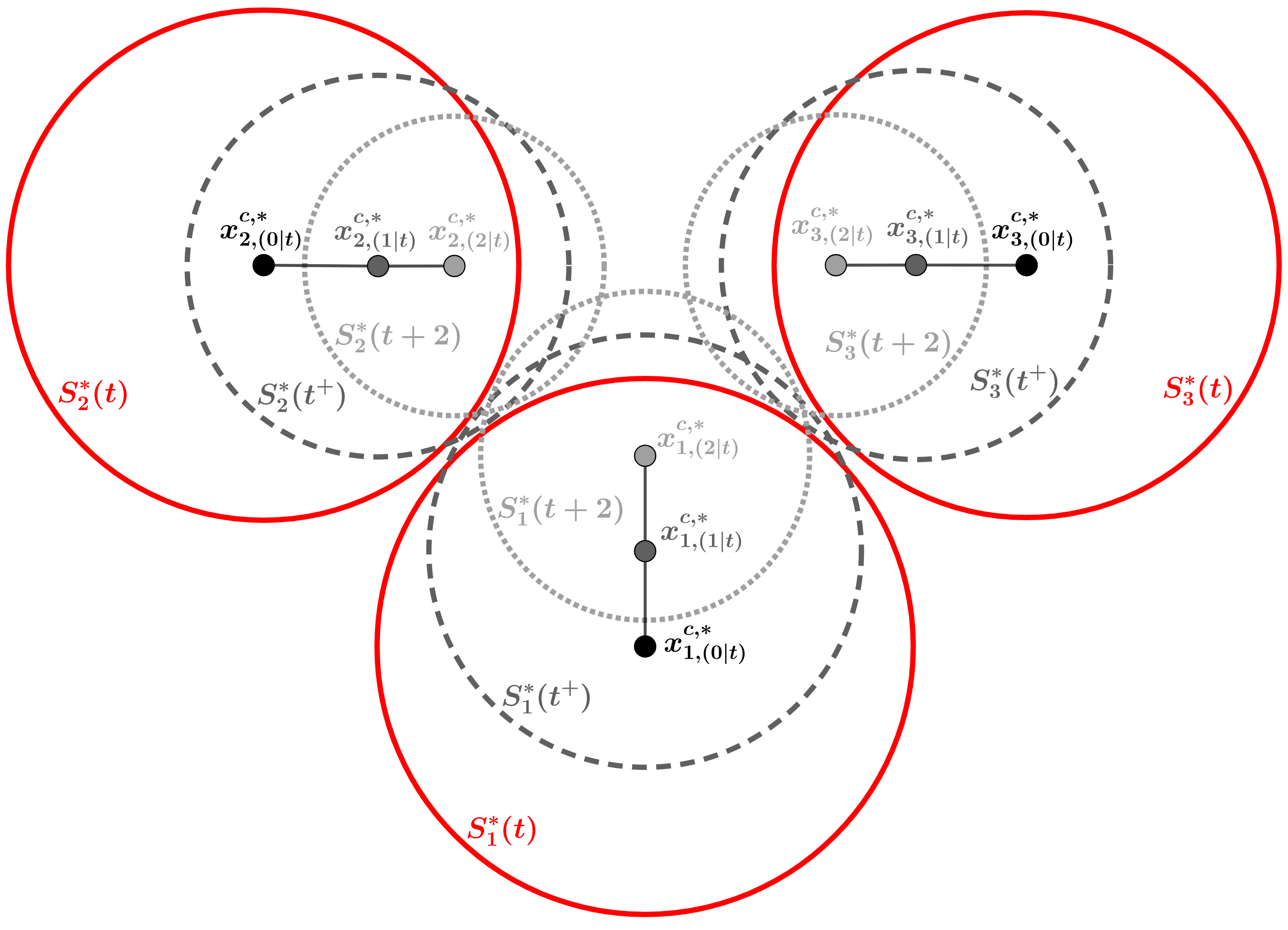}
    \caption{Loss of recursive feasibility without \textit{FoS}. Although the local MPC problems are feasible at time $t$ with disjoint active safe sets $S_i^\ast(t)$ (red), the naive update to $t^+$ yields sets $S_i^\ast(t^+)$ (dashed) that are no longer pairwise disjoint. The resulting overlaps can violate the safe-set constraints and render the next-step problems infeasible.}
    \label{fig:FreezeShiftRule3Agents}
\end{figure}
Without the \textit{FoS} update rule, pairwise disjointness of the
active safe sets is not forward invariant. Even if
\(\{S_j^\ast(t)\}_{j\in\mathcal I}\) is pairwise disjoint, the naive
update \(S_i^\ast(t^+)=\Gamma_i(x_i(t^+))\), applied independently by
all agents, may yield overlapping safe sets for some \(i\neq j\); see
Fig.~\ref{fig:FreezeShiftRule3Agents}. Such overlaps can invalidate
the geometric separation used in the contingency constraints and may
lead to loss of recursive feasibility.
The \textit{FoS} rule prevents this by decoupling the model-dependent
generation of candidate safe sets from the geometric decision whether
to \textit{shift} the active safe set to the new candidate or to
\textit{freeze} it at its previously active value.

In the following, the closed-loop execution of the proposed decentralized contingency MPC scheme is given in algorithmic form.
\begin{algorithm}[H]
\caption{Decentralized Contingency MPC with \textit{FoS} Safe Sets}
\label{alg:dcmpc_freeze_shift}
\begin{algorithmic}[1]
\Require Agents \(M\), sampling time \(T_s\), horizons \(N_{\mathrm n},N_{\mathrm c}\), local constraints, $x_i^{\mathrm{ref}}$, and generators \(\Gamma_i(\cdot)\)
\State Initialize \(t\gets0\) and pairwise disjoint safe sets \(S_i^\ast(0)\), \(i\in\mathcal I\)
\While{termination criterion not met}
    \ForAll{\(i\in\mathcal I\) \textbf{(in parallel)}}
        \State Measure \(x_i(t)\), observe \(\{x_j(t)\}_{j\neq i}\), and reconstruct \(\{S_j^\ast(t)\}_{j\in\mathcal I}\)
        \State Solve the local FHOCP~\eqref{prob:local_fhocp}
        \State \(u_i(t)\gets u^{\mathrm n,\ast}_{i,(0|t)}=u^{\mathrm c,\ast}_{i,(0|t)}\)
    \EndFor
    \State Apply \(\{u_i(t)\}_{i\in\mathcal I}\) and measure \(\{x_i(t^+)\}_{i\in\mathcal I}\)
    \ForAll{\(i\in\mathcal I\)}
        \State Compute \(\tilde S_i(t^+)\) via~\eqref{eq:candidate_safe_set}
        \State Update \(S_i^\ast(t^+)\) via~\eqref{eq:freeze_shift_update} and \(\hat J_i^{\mathrm c}(t^+)\) via~\eqref{eq:lyap_bound_update_main}
    \EndFor
    \State \(t\gets t^+\)
\EndWhile
\end{algorithmic}
\end{algorithm}
\subsection{Plug-and-play (\textit{PnP}) operation}
\label{subsec:plug_and_play}
\textit{PnP} operation allows agents to enter or leave the workspace online while preserving safety and recursive feasibility without centralized coordination or access to past communication logs. At time \(t_{\mathrm{join}}\), a joining agent observes the current states
of the already active agents and evaluates the state-induced safe sets
\(\Gamma_j(x_j(t_{\mathrm{join}}))\). If two such sets overlap, the
corresponding candidates must have been rejected by the preceding
\textit{FoS} update, i.e.,
\(\chi_j(t_{\mathrm{join}}^-)=1\) for the involved agents. Hence, their
current active safe sets cannot be inferred from
\(\Gamma_j(x_j(t_{\mathrm{join}}))\) alone and are replaced by the
history-free overapproximation. If no such rejection is indicated, the active safe-sets of other agents can be generated by \(\Gamma_j\).

\subsubsection{Frozen-set overapproximation}

For each agent $j\in\mathcal I$, let $S_j^\ast(t)$ denote the active local safe set and let $r_j>0$ denote the agent radius.
By the safe-set containment constraint~\eqref{prob:local_fhocp_containment} enforced in the local FHOCP, the current body set satisfies $\mathcal B_j(t)\subseteq S_j^\ast(t),$
which implies
\begin{equation}
\|p_j(t)-c_j(t)\|\le R_j(t)-r_j.
\label{eq:pp_center_distance_bound}
\end{equation}
Assume moreover that a known upper bound $R_{j,\max}>0$ on the active safe-set radius is available, i.e.,
\begin{equation}
R_j(t)\le R_{j,\max},
\qquad \forall t\in\mathbb Z_+.
\label{eq:pp_radius_upper_bound}
\end{equation}
Such a bound may be obtained from the chosen safe-set generator and the admissible state set.

\begin{definition}[History-free reconstruction ball]
\label{def:pp_reconstruction_ball_frozen}
Given the current position $p_j(t)\in\mathbb R^{n_p}$ of agent $j$ and an upper bound $R_{j,\max}$ on the possibly frozen safe-set radius, define the \emph{history-free reconstruction ball}
\begin{equation}
S_j^{\mathrm{rec}}(t)
:=
\mathbb{B}\!\left(p_j(t),\,R_j^{\mathrm{rec}}\right),
\quad
R_j^{\mathrm{rec}}:=2R_{j,\max}-r_j.
\label{eq:pp_rec_ball}
\end{equation}
\end{definition}

\begin{lemma}[Outer approximation of frozen sets]
\label{lem:pp_outer_approx_frozen}
Suppose that \eqref{eq:pp_center_distance_bound} and \eqref{eq:pp_radius_upper_bound} hold.
Then, for any time $t$ at which agent $j$ is frozen, the active safe set is contained in the reconstruction ball, i.e.,
\begin{equation}
S_j^\ast(t)\subseteq S_j^{\mathrm{rec}}(t).
\label{eq:pp_outer_inclusion}
\end{equation}
\end{lemma}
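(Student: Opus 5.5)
The proof is a direct triangle-inequality argument on Euclidean balls. Throughout, the freezing plays no role beyond ensuring that the two hypotheses \eqref{eq:pp_center_distance_bound} and \eqref{eq:pp_radius_upper_bound} are available at the time in question. The first holds because the containment constraint \eqref{prob:local_fhocp_containment} at $k=0$ together with \eqref{prob:local_fhocp_init} gives $\|p_j(t)-c_j(t)\|\le R_j(t)-r_j$ for the currently active (possibly frozen) set.

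Fix an arbitrary point $q\in S_j^\ast(t)=\mathbb B(c_j(t),R_j(t))$, so that $\|q-c_j(t)\|\le R_j(t)$. The triangle inequality and \eqref{eq:pp_center_distance_bound} give
\[
\|q-p_j(t)\|\le \|q-c_j(t)\|+\|c_j(t)-p_j(t)\|\le R_j(t)+R_j(t)-r_j=2R_j(t)-r_j .
\]
Applying \eqref{eq:pp_radius_upper_bound} then yields $\|q-p_j(t)\|\le 2R_{j,\max}-r_j=R_j^{\mathrm{rec}}$. Hence $q\in S_j^{\mathrm{rec}}(t)$ by Definition~\ref{def:pp_reconstruction_ball_frozen}, and since $q$ was arbitrary, \eqref{eq:pp_outer_inclusion} follows.

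The only point needing care is that $R_j^{\mathrm{rec}}$ is a valid (nonnegative) radius. This follows from the footprint containment \eqref{eq:footprint_containment}, which forces $R_j(t)\ge r_j$ and hence $2R_{j,\max}-r_j\ge R_{j,\max}>0$. The argument uses only the current position $p_j(t)$, so it holds whether or not agent $j$ is frozen, and no history of the safe-set parameters is required.
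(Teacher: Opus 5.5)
Your proof is correct and follows the paper's argument essentially line for line: fix $q\in S_j^\ast(t)$, use the triangle inequality together with \eqref{eq:pp_center_distance_bound} to obtain $\|q-p_j(t)\|\le 2R_j(t)-r_j$, and then apply \eqref{eq:pp_radius_upper_bound}. Your extra remarks are valid but not needed for the lemma: that $R_j^{\mathrm{rec}}>0$ because $R_j(t)\ge r_j$, and that the frozen status plays no role in the argument.
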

\begin{proof}
Fix such a time $t$ and let $q\in S_j^\ast(t)=\mathbb B(c_j(t),R_j(t))$.
By the triangle inequality,
\[
\|q-p_j(t)\|
\le
\|q-c_j(t)\|+\|c_j(t)-p_j(t)\|.
\]
The first term satisfies $\|q-c_j(t)\|\le R_j(t)$ by definition of $S_j^\ast(t)$, and the second term satisfies $\|c_j(t)-p_j(t)\|\le R_j(t)-r_j$
by \eqref{eq:pp_center_distance_bound}.
Hence,
\[
\|q-p_j(t)\|
\le
2R_j(t)-r_j
\le
2R_{j,\max}-r_j
=
R_j^{\mathrm{rec}}.
\]
Therefore,
\[
q\in \mathbb B\bigl(p_j(t),R_j^{\mathrm{rec}}\bigr)=S_j^{\mathrm{rec}}(t),
\]
which proves \eqref{eq:pp_outer_inclusion}.
\end{proof}

Lemma~\ref{lem:pp_outer_approx_frozen} provides a conservative measurement-based substitute for an unknown frozen safe set. For non-frozen agents, no overapproximation is needed because the active safe set is reconstructable from the current state via \(\Gamma_j\).
\begin{assumption}[Feasible join initialization]
\label{ass:pp_join_feasible}
At time $t_{\mathrm{join}}$, a joining agent $i$ can select an initial active safe set
\(
S_i^\ast(t_{\mathrm{join}})
\)
according to the deterministic safe-set construction framework such that it does not overlap with the safe-set representations of already active agents, i.e.,
\begin{equation}
S_i^\ast(t_{\mathrm{join}})\cap \hat S_j(t_{\mathrm{join}})=\emptyset,
\qquad \forall j\in\mathcal I,
\label{eq:pp_join_nonoverlap}
\end{equation}
where
\[
\hat S_j(t_{\mathrm{join}})
:=
\begin{cases}
S_j^\ast(t_{\mathrm{join}}), & \text{if } \chi_j(t_{\mathrm{join}})=0,\\[0.2em]
S_j^{\mathrm{rec}}(t_{\mathrm{join}}), & \text{if } \chi_j(t_{\mathrm{join}})=1.
\end{cases}
\]
\end{assumption}

\subsubsection{Join and leave protocol}
\begin{algorithm}[H]
\caption{\emph{PnP} join protocol}
\label{alg:join_protocol}
\begin{algorithmic}[1]
\Require Safe-set generators \(\Gamma_j(\cdot)\), radius bounds \(R_{j,\max}\), and local FHOCP ingredients
\State Obtain \(\{x_j(t_{\mathrm{join}})\}_{j\in\mathcal I}\) and infer \(\{\chi_j(t_{\mathrm{join}}^-)\}_{j\in\mathcal I}\) for overlapping agents from \(\Gamma_j(x_j(t_{\mathrm{join}}))\)
\State Construct the safe-set representations $\forall j\in\mathcal I$
\[
\hat S_j(t_{\mathrm{join}})
=
\left\{
\begin{array}{@{}ll@{}}
S_j^{\mathrm{rec}}(t_{\mathrm{join}}),
&\!\! \chi_j(t_{\mathrm{join}}^-)=1.\\[0.2em]
\Gamma_j(x_j(t_{\mathrm{join}})),
&\!\! \text{otherwise}.
\end{array}
\right.
\]
\State Choose \(S_i^\ast(t_{\mathrm{join}})=\Gamma_i(x_i(t_{\mathrm{join}}))\) such that
\[
S_i^\ast(t_{\mathrm{join}})\cap \hat S_j(t_{\mathrm{join}})=\emptyset,
\qquad \forall j\in\mathcal I.
\]
\If{no such \(S_i^\ast(t_{\mathrm{join}})\) exists}
    \State Reject or postpone the join
    \State \textbf{return}
\EndIf
\State Solve the FHOCP~\eqref{prob:local_fhocp} for agent \(i\) using \(S_i^\ast(t_{\mathrm{join}})\)
\State Apply the shared first input and update \(\mathcal I\gets \mathcal I\cup\{i\}\)
\end{algorithmic}
\end{algorithm}
A leaving protocol is immediate in the decentralized setting: When an agent exits, it is removed from the set of observed agents and from the locally reconstructed constraints. The remaining agents continue to solve their local contingency MPC problems, and the closed-loop guarantees are preserved by construction.

\section{System-theoretic analysis}
\label{sec:analysis}
This section establishes the main closed-loop guarantees of the proposed decentralized contingency MPC scheme combined with the \textit{FoS} update rule.
First, for a fixed set of active agents, recursive feasibility and collision avoidance are addressed.
After that, the Lyapunov-type decrease induced by the shifted-tail bound update is analyzed and the corresponding convergence statement is derived.
The \emph{PnP} protocol introduced in Section~5 is treated separately and does not enter the core recursive-feasibility statement below.
The recursive-feasibility and Lyapunov-type arguments developed in this section follow the classical shifted-candidate logic of MPC and NMPC, adapted here to the decentralized contingency formulation with \textit{FoS}-based safe-set updates~\cite{ChenAllgoewer1998,Mayne2000}.
The detailed proofs are deferred to the Appendix.
\begin{assumption}[Initial feasibility and separation]
\label{ass:initial_feasibility}
At time $t_0=0$, the local FHOCP~\eqref{prob:local_fhocp} is feasible for every agent $i\in\mathcal I$, and the initial active local safe sets are pairwise disjoint, i.e.,
\[
S_i^\ast(0)\cap S_j^\ast(0)=\emptyset,
\qquad \forall i\neq j.
\]
\end{assumption}

\begin{assumption}[Nominal completion]
\label{ass:nominal_completion}
Whenever a feasible contingency candidate at time $t^+$ is available, there exists a feasible nominal candidate at time $t^+$ satisfying the nominal initialization, dynamics, and state/input constraints in~\eqref{prob:local_fhocp}.
This is, for example, satisfied if the nominal part is chosen identical to the contingency part.
\end{assumption}

\begin{lemma}[Disjointness invariance]
\label{lem:disjoint_invariance}
Under Assumption~\ref{ass:initial_feasibility} and the update rule~\eqref{eq:freeze_shift_update}, the active local safe sets remain pairwise disjoint for all $t\in\mathbb{Z}_+$, i.e.,
\[
S_i^\ast(t)\cap S_j^\ast(t)=\emptyset,
\qquad \forall i\neq j.
\]
\end{lemma}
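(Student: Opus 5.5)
The argument is an induction on $t$. The base case $t=0$ is exactly the separation part of Assumption~\ref{ass:initial_feasibility}.

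For the inductive step, assume $S_i^\ast(t)\cap S_j^\ast(t)=\emptyset$ for all $i\neq j$. Fix an arbitrary pair $i\neq j$. By \eqref{eq:freeze_shift_update}, each of the two agents either freezes or shifts, so there are four cases to check for $S_i^\ast(t^+)\cap S_j^\ast(t^+)$.

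\emph{Both freeze} ($\chi_i(t)=\chi_j(t)=1$). Then $S_i^\ast(t^+)=S_i^\ast(t)$ and $S_j^\ast(t^+)=S_j^\ast(t)$, and these are disjoint by the induction hypothesis.

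\emph{Both shift} ($\chi_i(t)=\chi_j(t)=0$). Then $S_i^\ast(t^+)=\tilde S_i(t^+)$ and $S_j^\ast(t^+)=\tilde S_j(t^+)$. Since $\chi_i(t)=0$, the first clause of \eqref{eq:freezeIndicator} fails for the index $j$, so $\tilde S_i(t^+)\cap\tilde S_j(t^+)=\emptyset$.

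\emph{Agent $i$ shifts, agent $j$ freezes} ($\chi_i(t)=0$, $\chi_j(t)=1$). Then $S_i^\ast(t^+)=\tilde S_i(t^+)$ and $S_j^\ast(t^+)=S_j^\ast(t)$. Since $\chi_i(t)=0$, the second clause of \eqref{eq:freezeIndicator} fails for $j$, so $\tilde S_i(t^+)\cap S_j^\ast(t)=\emptyset$.

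\emph{Agent $i$ freezes, agent $j$ shifts}. This is the symmetric case: swap the roles of $i$ and $j$ and use $\chi_j(t)=0$.

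The key observation is that the test \eqref{eq:freezeIndicator} quantifies over all $j\neq i$. A shifting agent therefore certifies disjointness from every other agent's candidate set and from every other agent's current active set. Whichever branch the other agent takes, its next active set is one of these two, so disjointness holds in all four cases. Because the pair $i\neq j$ was arbitrary, pairwise disjointness holds at $t^+$, which closes the induction.

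The main subtlety is consistency. Every agent must evaluate $\chi_j$ and the sets $S_j^\ast(t)$, $\tilde S_j(t^+)$ identically, so that "the other agent's next set" is uniquely defined. This is guaranteed by Assumption~\ref{ass:local_reconstructability}, Remark~\ref{rem:decentralized_update}, and the determinism of $\Gamma_j$.

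A second detail is that $\chi$ tests intersection of closed balls. Non-intersection is therefore exactly the disjointness we need, with no boundary issues; the strict inequalities in \eqref{eq:freeze_distance_test} reflect that two balls touching at a single point already count as intersecting. The lemma does not depend on feasibility of the FHOCP, since the update rule is defined for any successor state in $\mathcal X_i$.
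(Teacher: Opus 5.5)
Your proof is correct and follows the paper's own argument essentially step for step: induction with the base case from Assumption~\ref{ass:initial_feasibility}, then the same four freeze/shift cases, each closed either by the induction hypothesis or by the relevant clause of \eqref{eq:freezeIndicator} for the shifting agent. One small slip in your closing aside: closed balls that touch do intersect, since $\mathbb{B}(c_1,R_1)\cap\mathbb{B}(c_2,R_2)\neq\emptyset$ if and only if $\|c_1-c_2\|\le R_1+R_2$, so the strict inequalities in \eqref{eq:freeze_distance_test} actually fail to flag tangency rather than reflect it; this is harmless because your argument relies on the set-based definition \eqref{eq:freezeIndicator}, not on the distance test.
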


\begin{theorem}[Recursive feasibility]
\label{thm:rec_feas}
Suppose that Assumptions~\ref{ass:exact_dynamics}--\ref{ass:contingency_recoverability}, \ref{ass:initial_feasibility}, and~\ref{ass:nominal_completion} hold.
Consider the proposed decentralized contingency MPC scheme with the \textit{FoS} update rule~\eqref{eq:freeze_shift_update} and the shifted-tail update of the Lyapunov bound~\eqref{eq:lyap_bound_update_main}.
Then the local FHOCP~\eqref{prob:local_fhocp} remains feasible for all times, i.e., for every agent $i\in\mathcal I$, \eqref{prob:local_fhocp} is feasible at every  $t\in\mathbb Z_+$.
\end{theorem}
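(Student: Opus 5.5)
We argue by induction on $t$. The base case $t=0$ is Assumption~\ref{ass:initial_feasibility}. For the inductive step, assume that \eqref{prob:local_fhocp} is feasible for agent $i$ at time $t$ with optimal solution $(X_i^{\mathrm n,\ast},U_i^{\mathrm n,\ast},X_i^{\mathrm c,\ast},U_i^{\mathrm c,\ast},\bar x_i^{\mathrm c,\ast},\bar u_i^{\mathrm c,\ast})$. By Assumption~\ref{ass:exact_dynamics} and the shared first input \eqref{eq:shared_input_applied}, the successor state is $x_i(t^+)=x^{\mathrm c,\ast}_{i,(1|t)}$. We build the usual shifted contingency candidate at $t^+$:
\[
x^{\mathrm c}_{i,(k|t^+)}:=x^{\mathrm c,\ast}_{i,(k+1|t)},\quad u^{\mathrm c}_{i,(k|t^+)}:=u^{\mathrm c,\ast}_{i,(k+1|t)},\quad k\in\mathbb Z_0^{N_{\mathrm c}-2},
\]
and append $x^{\mathrm c}_{i,(N_{\mathrm c}|t^+)}:=\bar x_i^{\mathrm c,\ast}(t)$, $u^{\mathrm c}_{i,(N_{\mathrm c}-1|t^+)}:=\bar u_i^{\mathrm c,\ast}(t)$. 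We keep the terminal pair $(\bar x_i^{\mathrm c}(t^+),\bar u_i^{\mathrm c}(t^+)):=(\bar x_i^{\mathrm c,\ast}(t),\bar u_i^{\mathrm c,\ast}(t))$. The nominal candidate comes from Assumption~\ref{ass:nominal_completion}. We may take it identical to the contingency candidate, truncated or extended by the equilibrium input, so that \eqref{prob:local_fhocp_shared} holds trivially.

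Most constraints then follow directly. Constraints \eqref{prob:local_fhocp_init}, \eqref{prob:local_fhocp_dyn_c}, \eqref{prob:local_fhocp_bounds_c}, \eqref{prob:static_obstacle_constraintsMPC}, \eqref{prob:local_fhocp_terminal_state} and \eqref{prob:local_fhocp_terminal_equilibrium} are inherited from the optimal solution at $t$, because the appended step is an equilibrium. The tail constraint \eqref{prob:local_fhocp_tail} at $t^+$ is implied by the tail constraint at $t$ for indices $k\ge1$. For the appended equilibrium point we use $\bar x_i^{\mathrm c,\ast}(t)=x^{\mathrm c,\ast}_{i,(N_{\mathrm c}|t)}$, which is covered by the case $l=N_{\mathrm c}$, $k\le N_{\mathrm c}$. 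The Lyapunov constraint \eqref{prob:local_fhocp_lyap} follows from the update \eqref{eq:lyap_bound_update_main}. The shifted cost equals $J_i^{\mathrm c,\ast}(t)-\ell_i^{\mathrm c}(x_i(t)-\bar x_i^{\mathrm c,\ast},u_i(t)-\bar u_i^{\mathrm c,\ast})+\ell_i^{\mathrm c}(0,0)$. Hence we need $\ell_i^{\mathrm c}(0,0)=0$; I would state this explicitly as a standing property of the stage cost. The offset term $V_i^{\mathrm c}$ is unchanged because the equilibrium is kept.

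The substantive step is containment, \eqref{prob:local_fhocp_containment} together with \eqref{prob:local_fhocp_terminal_position}, in the updated active set $S_i^\ast(t^+)$. Here we split according to the \textit{FoS} rule \eqref{eq:freeze_shift_update}.
\begin{itemize}
\item If $\chi_i(t)=1$, then $S_i^\ast(t^+)=S_i^\ast(t)$. The shifted positions are a subset of the old contingency positions, and all of them satisfy \eqref{prob:local_fhocp_containment} at $t$. The terminal position is likewise unchanged.
\item If $\chi_i(t)=0$, then $S_i^\ast(t^+)=\Gamma_i(x_i(t^+))=\Gamma_i(x^{\mathrm c,\ast}_{i,(1|t)})$. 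The tail constraint \eqref{prob:local_fhocp_tail} at $t$ with $k=1$ states exactly that $\|p^{\mathrm c,\ast}_{i,(l|t)}-c_i(x^{\mathrm c,\ast}_{i,(1|t)})\|\le R_i(x^{\mathrm c,\ast}_{i,(1|t)})-r_i$ for all $l\ge1$. These are precisely the shifted candidate positions, including the terminal equilibrium position. The margin $r_i\ge0$ then gives \eqref{prob:local_fhocp_terminal_position}.
\end{itemize}

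I expect the main obstacle to be clarifying what role Lemma~\ref{lem:disjoint_invariance} plays. The local FHOCP contains no explicit inter-agent constraint, so feasibility is purely local once the active set is fixed. Disjointness is therefore needed for collision avoidance rather than for feasibility. I would remark that Lemma~\ref{lem:disjoint_invariance} guarantees that the reconstructed sets used by each agent are consistent and well defined (Assumption~\ref{ass:local_reconstructability}), so that the \textit{FoS} decision is identical from every agent's viewpoint. A second delicate point is that Assumption~\ref{ass:position_invariance} and Assumption~\ref{ass:contingency_recoverability} are not strictly needed for the shifted candidate. I would note them as guaranteeing nonemptiness of the reachability condition, and otherwise rely only on the inductive feasibility. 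Since all constraints hold for the candidate, \eqref{prob:local_fhocp} is feasible at $t^+$, which completes the induction.
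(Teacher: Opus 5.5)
Your proposal is correct and follows essentially the paper's route: the shifted contingency candidate with the terminal equilibrium appended, the \textit{FoS} case split (frozen set unchanged; shifted set covered by \eqref{prob:local_fhocp_tail} at $k=1$), the shifted-cost identity for \eqref{prob:local_fhocp_lyap}, and a nominal completion, with Lemma~\ref{lem:disjoint_invariance} playing no role in feasibility (consistency of the reconstructed sets comes from Assumption~\ref{ass:local_reconstructability}, not from disjointness). You are right to flag that $\ell_i^{\mathrm c}(0,0)=0$ must be assumed: the paper takes it from Assumption~\ref{ass:regularity_cost}(3), which is not among the theorem's listed hypotheses, and your explicit nominal plan copied from the contingency candidate is a self-contained substitute for Assumption~\ref{ass:nominal_completion}; only fix the index range of the state shift, which should be $k\in\mathbb Z_0^{N_{\mathrm c}-1}$.
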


\begin{theorem}[Closed-loop collision avoidance]
\label{thm:safety}
Suppose that the assumptions of Theorem~\ref{thm:rec_feas} hold.
Then, under the proposed decentralized contingency MPC scheme with the \textit{FoS} update rule, the closed-loop execution is collision-free for all times, i.e.,
\[
\mathcal B_i(t)\cap \mathcal B_j(t)=\emptyset,
\qquad \forall i\neq j,\ \forall t\in\mathbb Z_+.
\]
\end{theorem}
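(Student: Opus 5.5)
The plan is to reduce closed-loop collision avoidance to two facts already available: recursive feasibility (Theorem~\ref{thm:rec_feas}) and invariance of pairwise disjointness of the active safe sets (Lemma~\ref{lem:disjoint_invariance}). The key geometric link is the footprint containment $\mathcal B_i(t)\subseteq S_i^\ast(t)$. Once this holds at every $t$, disjointness of the safe sets transfers directly to the bodies, since $\mathcal B_i(t)\cap\mathcal B_j(t)\subseteq S_i^\ast(t)\cap S_j^\ast(t)=\emptyset$ for all $i\neq j$.

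The first step is to establish the footprint containment \eqref{eq:footprint_containment} from the constraints of the FHOCP. Fix $t\in\mathbb Z_+$ and an agent $i$. By Theorem~\ref{thm:rec_feas}, problem~\eqref{prob:local_fhocp} is feasible at time $t$, so a feasible (indeed optimal) solution exists. Constraint~\eqref{prob:local_fhocp_init} gives $x^{\mathrm c}_{i,(0|t)}=x_i(t)$, and hence $p^{\mathrm c}_{i,(0|t)}=C_ix_i(t)=p_i(t)$. Taking $k=0$ in~\eqref{prob:local_fhocp_containment} then yields $\|p_i(t)-c_i(t)\|\le R_i(t)-r_i$. By the triangle inequality, any $q\in\mathcal B_i(t)$ satisfies $\|q-c_i(t)\|\le r_i+R_i(t)-r_i=R_i(t)$, so $\mathcal B_i(t)\subseteq S_i^\ast(t)$.

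The second step invokes Lemma~\ref{lem:disjoint_invariance}, using Assumption~\ref{ass:initial_feasibility} together with the \textit{FoS} rule~\eqref{eq:freeze_shift_update}, to obtain $S_i^\ast(t)\cap S_j^\ast(t)=\emptyset$ for all $i\neq j$ and all $t$. Combining this with the first step through the set inclusion above completes the argument.

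The only subtle point is the time indexing. The active sets $S_i^\ast(t)$ and $S_j^\ast(t)$ that enter the disjointness lemma at time $t$ must be the same sets used in the FHOCPs solved at time $t$, and the measured state $x_i(t)$ must be the true closed-loop state. Both hold here. Assumption~\ref{ass:exact_dynamics} excludes model mismatch, so $x_i(t)$ is the true state. Assumption~\ref{ass:local_reconstructability} and Algorithm~\ref{alg:dcmpc_freeze_shift} ensure that every agent uses the consistently updated $S_j^\ast(t)$. I therefore expect no real obstacle: all the difficulty sits in Theorem~\ref{thm:rec_feas} and Lemma~\ref{lem:disjoint_invariance}, which are taken as given.
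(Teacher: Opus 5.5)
Your proposal is correct and follows the paper's proof. Recursive feasibility (Theorem~\ref{thm:rec_feas}) gives $\mathcal B_i(t)\subseteq S_i^\ast(t)$ via \eqref{prob:local_fhocp_init} and \eqref{prob:local_fhocp_containment} at $k=0$, and Lemma~\ref{lem:disjoint_invariance} then carries disjointness of the active safe sets over to the bodies; you only spell out the triangle-inequality step that the paper leaves implicit.
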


\begin{corollary}[Preservation under \emph{PnP} events]
\label{cor:pnp_preservation}
Consider the proposed decentralized contingency MPC scheme with the FoS update rule.

\emph{Join:}
Let a new agent $i_{\mathrm{new}}\notin\mathcal I$ request insertion at time $t_{\mathrm{join}}$.
If Assumption~\ref{ass:pp_join_feasible} holds, then recursive feasibility and collision avoidance are preserved for the augmented active set
\[
\mathcal I^+ := \mathcal I \cup \{i_{\mathrm{new}}\} \quad \forall t\ge t_{\mathrm{join}}.
\]
\emph{Leave:}
If an agent $j\in\mathcal I$ leaves the workspace at time $t_{\mathrm{leave}}$ and is removed from the locally reconstructed constraints of the remaining agents. Recursive feasibility and collision avoidance are preserved for the reduced active set
\[
\mathcal I^- := \mathcal I \setminus \{j\} \quad \forall t \geq t_{\mathrm{leave}}.
\]
\end{corollary}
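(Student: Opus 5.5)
The corollary reduces to Lemma~\ref{lem:disjoint_invariance}, Theorem~\ref{thm:rec_feas} and Theorem~\ref{thm:safety}. Those results need only two things at a given time: pairwise disjointness of the active safe sets, and feasibility of each local FHOCP~\eqref{prob:local_fhocp}. So the plan is to show that a join or leave event produces a configuration satisfying Assumption~\ref{ass:initial_feasibility} for the new agent set, at $t_{\mathrm{join}}$ or $t_{\mathrm{leave}}$ respectively. We then restart the time axis at that instant and apply the fixed-agent-set results verbatim to $\mathcal I^+$ or $\mathcal I^-$.

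\emph{Leave.} At $t_{\mathrm{leave}}$ the sets $\{S_k^\ast(t_{\mathrm{leave}})\}_{k\in\mathcal I}$ are pairwise disjoint by Lemma~\ref{lem:disjoint_invariance}, and any subfamily of a pairwise disjoint family is still pairwise disjoint.

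Each remaining agent $i\in\mathcal I^-$ is feasible at $t_{\mathrm{leave}}$ by Theorem~\ref{thm:rec_feas}. The local FHOCP~\eqref{prob:local_fhocp} contains no constraint that refers to other agents. The only coupling is through the FoS indicator~\eqref{eq:freezeIndicator}, which is an existential test over $j\neq i$. Removing agent $j$ can therefore only change some $\chi_i$ from $1$ to $0$, never the reverse.

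It remains to check that the disjointness argument of Lemma~\ref{lem:disjoint_invariance} and the shifted-candidate argument of Theorem~\ref{thm:rec_feas} hold for whichever branch of~\eqref{eq:freeze_shift_update} is taken. Those proofs treat both the freeze and the shift branch, so this holds. Hence Assumption~\ref{ass:initial_feasibility} is satisfied for $\mathcal I^-$ at $t_{\mathrm{leave}}$, and both guarantees follow for all $t\ge t_{\mathrm{leave}}$.

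\emph{Join.} First, disjointness at $t_{\mathrm{join}}$. If agent $j$ is not frozen, $\hat S_j=S_j^\ast$. If it is frozen, Lemma~\ref{lem:pp_outer_approx_frozen} gives $S_j^\ast(t_{\mathrm{join}})\subseteq S_j^{\mathrm{rec}}(t_{\mathrm{join}})=\hat S_j$. In both cases $S_j^\ast\subseteq\hat S_j$, so~\eqref{eq:pp_join_nonoverlap} implies $S_{i_{\mathrm{new}}}^\ast\cap S_j^\ast=\emptyset$ for every $j\in\mathcal I$. Pairwise disjointness among the old agents is given by Lemma~\ref{lem:disjoint_invariance}.

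Second, feasibility. The old agents are feasible by Theorem~\ref{thm:rec_feas}. For the new agent, Assumption~\ref{ass:contingency_recoverability} applied to $S_{i_{\mathrm{new}}}^\ast(t_{\mathrm{join}})=\Gamma_{i_{\mathrm{new}}}(x_{i_{\mathrm{new}}}(t_{\mathrm{join}}))$ gives a contingency maneuver. Assumption~\ref{ass:nominal_completion} supplies a nominal part. We initialize $\hat J^{\mathrm c}_{i_{\mathrm{new}}}(t_{\mathrm{join}})$ to the cost of that feasible maneuver, or to $+\infty$, so that~\eqref{prob:local_fhocp_lyap} holds. Assumption~\ref{ass:initial_feasibility} then holds for $\mathcal I^+$ at $t_{\mathrm{join}}$, and Theorems~\ref{thm:rec_feas} and~\ref{thm:safety} carry over from there on.

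\emph{Main obstacle.} The delicate point is the new agent's contingency plan. Assumption~\ref{ass:contingency_recoverability} yields a maneuver inside $S_{i_{\mathrm{new}}}^\ast$, but the FHOCP also requires the tail-containment constraint~\eqref{prob:local_fhocp_tail} and the obstacle constraint~\eqref{prob:static_obstacle_constraintsMPC}. I would handle this by reading ``feasible contingency maneuver'' in Assumption~\ref{ass:contingency_recoverability} as feasibility for all contingency constraints of~\eqref{prob:local_fhocp}, which is how it must already be read for Assumption~\ref{ass:initial_feasibility} at $t=0$. If that reading is not accepted, I would fold it into the admissibility condition of Algorithm~\ref{alg:join_protocol}: a join is rejected or postponed unless the FHOCP for the new agent is feasible.

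A second subtle point is the first FoS update at $t_{\mathrm{join}}$. Agents that were present before the join must include $S_{i_{\mathrm{new}}}^\ast$ and $\tilde S_{i_{\mathrm{new}}}$ in their test~\eqref{eq:freezeIndicator}. Agent $i_{\mathrm{new}}$ itself reconstructs only the superset $\hat S_j$ of each frozen $S_j^\ast$. Testing against a superset makes it freeze at least as often, which preserves the disjointness argument.
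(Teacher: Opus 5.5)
Your proposal is correct and follows the same route as the paper's proof. Both events are reduced to re-establishing Assumption~\ref{ass:initial_feasibility} at the event time: for a leave, via subfamily disjointness and deletion of constraints; for a join, via $S_j^\ast(t_{\mathrm{join}})\subseteq\hat S_j(t_{\mathrm{join}})$ from Lemma~\ref{lem:pp_outer_approx_frozen} together with feasibility. Theorems~\ref{thm:rec_feas} and~\ref{thm:safety} are then reapplied from that instant, and your extra discussion of the joining agent's own FHOCP feasibility (tail-containment, obstacle constraints, initialization of $\hat J^{\mathrm c}_{i_{\mathrm{new}}}$) and of the first FoS step makes explicit points the paper simply asserts as ``feasibility of the joining agent.''
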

The convergence mechanism is induced by the Lyapunov-type constraint~\eqref{prob:local_fhocp_lyap}.
The argument follows the standard receding-horizon value-function logic: feasibility of the shifted contingency candidate yields an explicit feasible upper bound for the successor problem, and the imposed Lyapunov constraint then enforces a monotone decrease of the optimal contingency-cost sequence. This is the same shifted-candidate proof architecture used in classical MPC stability proofs; see, e.g.,~\cite{ChenAllgoewer1998,Mayne2000}.

\begin{definition}[$\mathcal K_\infty$ function]
\label{def:Kinf}
A continuous function $\alpha:\mathbb{R}_+\to\mathbb{R}_+$ is said to belong to class $\mathcal K_\infty$ if
\(
\alpha(0)=0,
\)
$\alpha$ is strictly increasing, and
\(
\alpha(s)\to\infty
\ \text{as } s\to\infty.
\)
\end{definition}

\begin{assumption}[Regularity and coercivity]
\label{ass:regularity_cost}
Let
\(
\mathcal Z_i := \mathcal X_i\times\mathcal U_i,
\)
and let $\overline{\mathcal Z}_i$ denote its closure.
For each agent $i\in\mathcal I$, the following properties hold:

\begin{enumerate}
    \item \textbf{Continuity of dynamics and cost.}
    The mappings
    \(
    f_i:\mathbb R^{n_i}\times\mathbb R^{m_i}\to\mathbb R^{n_i},
    \
    \ell_i^{\mathrm c}:\mathbb R^{n_i}\times\mathbb R^{m_i}\to\mathbb R_+,
    \)
    and the offset cost
    \(
    V_i^{\mathrm c}:\mathbb R^{n_i}\times\mathbb R^{n_i}\to\mathbb R_+
    \)
    are continuous on the relevant admissible sets.

    \item \textbf{Incremental continuity of the dynamics.}
    There exists a function $\alpha_{f,i}\in\mathcal K_\infty$ such that
    \begin{align*}
    \|f_i(x,u)-f_i(\tilde x,\tilde u)\|
    \le
    \alpha_{f,i}\!\left(
    \left\|
    \begin{bmatrix}
    x-\tilde x\\
    u-\tilde u
    \end{bmatrix}
    \right\|
    \right),
    \\
    \forall (x,u),(\tilde x,\tilde u)\in\overline{\mathcal Z}_i.
    \end{align*}

    \item \textbf{Positive definiteness.}
    There exist functions
    \(
    \underline{\alpha}_{\ell,i},\,\overline{\alpha}_{\ell,i}\in\mathcal K_\infty
    \)
    such that
    \begin{equation}
    \begin{aligned}
    \underline{\alpha}_{\ell,i}\!\bigl(\|(e,\delta u)\|\bigr)
    \le
    \ell_i^{\mathrm c}(e,\delta u)
    \le
    \overline{\alpha}_{\ell,i}\!\bigl(\|(e,\delta u)\|\bigr),
    \\
    \forall (e,\delta u)\in\overline{\mathcal Z}^{\mathrm e}_i,
    \label{eq:stage_cost_bounds}
    \end{aligned}
    \end{equation}
    where $\overline{\mathcal Z}^{\mathrm e}_i$ denotes the relevant closed admissible set in error coordinates: $e = x_i^c-\bar{x}_i^c$ and $\delta u = u_i^\mathrm{c}-\bar{u}_i^c$.
    In particular, $\ell_i^{\mathrm c}$ is positive definite with respect to $(e,\delta u)=(0,0)$.

    \item \textbf{Positive definiteness of the offset cost.}
    There exist functions
    \(
    \underline{\alpha}_{V,i},\,\overline{\alpha}_{V,i}\in\mathcal K_\infty
    \)
    such that
    \[
    \underline{\alpha}_{V,i}\!\bigl(\|\bar{x}_i^c- x_i^{\mathrm{ref}}\|\bigr)
    \le
    V_i^{\mathrm c}(\bar{x}_i^c,x_i^{\mathrm{ref}})
    \le
    \overline{\alpha}_{V,i}\!\bigl(\|\bar{x}_i^c- x_i^{\mathrm{ref}}\|\bigr)
    \]
    for all relevant admissible equilibrium candidates $\bar x_i^{\mathrm c}$.
\end{enumerate}
\end{assumption}
For a fixed agent \(i\in\mathcal I\), define the equilibrium-tracking
errors associated with the optimal contingency equilibrium by
\[
e_i(t):=x_i(t)-\bar x_i^{\mathrm c,\ast}(t),
\qquad
\delta u_i(t):=u_i(t)-\bar u_i^{\mathrm c,\ast}(t),
\]
where
\(
u_i(t)=u_{i,(0|t)}^{\mathrm c,\ast}
      =u_{i,(0|t)}^{\mathrm n,\ast}
\)
is the applied shared first input and $x_i(t)$ is as in (\ref{prob:local_fhocp_init}).
\begin{proposition}
\label{prop:lyap_decrease}
Fix an agent \(i\in\mathcal I\) and suppose that recursive feasibility of
the local FHOCP~\eqref{prob:local_fhocp} holds for all
\(t\in\mathbb Z_+\). Assume that the Lyapunov
constraint~\eqref{prob:local_fhocp_lyap} is enforced at every time step,
that the bound \(\hat J_i^{\mathrm c}(\cdot)\) is updated by the
shifted-tail rule~\eqref{eq:lyap_bound_update_main}, and that
Assumption~\ref{ass:regularity_cost} holds. Then
\begin{equation}
J_i^{\mathrm c,\ast}(t^+)
\le
J_i^{\mathrm c,\ast}(t)
-
\ell_i^{\mathrm c}\!\bigl(e_i(t),\delta u_i(t)\bigr),
\qquad \forall t\in\mathbb Z_+.
\label{eq:lyap_decrease_main}
\end{equation}
In particular, the sequence $\{J_i^{\mathrm c,\ast}(t)\}_{t\in\mathbb Z_+}$ is monotonically nonincreasing and lower bounded, hence convergent, and
\begin{equation}
\sum_{t=0}^{\infty}
\ell_i^{\mathrm c}\!\bigl(e_i(t),\delta u_i(t)\bigr)
<\infty.
\label{eq:summable_stage_main}
\end{equation}
Consequently,
\(
\ell_i^{\mathrm c}\!\bigl(e_i(t),\delta u_i(t)\bigr)\to 0
\ \text{as } t\to\infty,
\)
and by the lower bound in~\eqref{eq:stage_cost_bounds},
\(
\|(e_i(t),\delta u_i(t))\|\to 0
\ \text{as } t\to\infty.
\)
\end{proposition}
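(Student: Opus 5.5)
The argument is short: it chains the Lyapunov constraint~\eqref{prob:local_fhocp_lyap} at time $t^+$ with the shifted-tail update~\eqref{eq:lyap_bound_update_main}. The standing hypothesis supplies recursive feasibility, so an optimizer exists at every $t\in\mathbb Z_+$. In particular, the optimal value $J_i^{\mathrm c,\ast}(t^+)$, meaning $J_i^{\mathrm c}$ evaluated at the optimizer of~\eqref{prob:local_fhocp} at time $t^+$, satisfies~\eqref{prob:local_fhocp_lyap}:
\[
J_i^{\mathrm c,\ast}(t^+)\le \hat J_i^{\mathrm c}(t^+).
\]
We then substitute the definition~\eqref{eq:lyap_bound_update_main} of $\hat J_i^{\mathrm c}(t^+)$. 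Using $x_i(t)=x^{\mathrm c,\ast}_{i,(0|t)}$ and $u_i(t)=u^{\mathrm c,\ast}_{i,(0|t)}$, the subtracted term is exactly $\ell_i^{\mathrm c}(e_i(t),\delta u_i(t))$, and this gives~\eqref{eq:lyap_decrease_main} directly.

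One subtlety needs care. $J_i^{\mathrm c,\ast}$ is not the minimizer of $J_i^{\mathrm c}$; it is $J_i^{\mathrm c}$ evaluated at the minimizer of $J_i$. The inequality above therefore has to be read as a constraint satisfied by that particular solution, not as a value-function comparison. The hypothesis that the constraint is enforced at every step together with feasibility makes this legitimate. Checking that the shifted candidate is actually compatible with~\eqref{prob:local_fhocp_lyap} is the job of Theorem~\ref{thm:rec_feas} and is assumed here. I expect the main obstacle to be exactly this bookkeeping point: making sure no comparison between different equilibria $\bar x_i^{\mathrm c,\ast}(t)$ and $\bar x_i^{\mathrm c,\ast}(t^+)$ is needed. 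None is, because the bound is carried forward through $\hat J_i^{\mathrm c}$.

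For the consequences, note that $\ell_i^{\mathrm c}\ge 0$ by Assumption~\ref{ass:regularity_cost}. Hence~\eqref{eq:lyap_decrease_main} makes the sequence $\{J_i^{\mathrm c,\ast}(t)\}$ nonincreasing. It is bounded below by $0$, since it is a sum of nonnegative stage costs and the nonnegative $V_i^{\mathrm c}$. By monotone convergence it has a limit $J_\infty\ge 0$. Summing~\eqref{eq:lyap_decrease_main} from $t=0$ to $T-1$ telescopes to
\[
\sum_{t=0}^{T-1}\ell_i^{\mathrm c}(e_i(t),\delta u_i(t))\le J_i^{\mathrm c,\ast}(0)-J_i^{\mathrm c,\ast}(T)\le J_i^{\mathrm c,\ast}(0).
\]
Here $J_i^{\mathrm c,\ast}(0)$ is finite by initial feasibility. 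Letting $T\to\infty$ yields~\eqref{eq:summable_stage_main}.

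Summability forces the terms to tend to zero. Finally, the lower bound in~\eqref{eq:stage_cost_bounds} gives $\underline{\alpha}_{\ell,i}(\|(e_i(t),\delta u_i(t))\|)\to 0$, since the errors lie in $\overline{\mathcal Z}^{\mathrm e}_i$ by the admissibility constraints~\eqref{prob:local_fhocp_bounds_c} and~\eqref{prob:local_fhocp_terminal_equilibrium}. Because $\underline{\alpha}_{\ell,i}\in\mathcal K_\infty$ is strictly increasing and continuous with $\underline{\alpha}_{\ell,i}(0)=0$, its inverse is continuous at $0$. Hence $\|(e_i(t),\delta u_i(t))\|\to 0$.
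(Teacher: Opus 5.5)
Your proof is correct. At the key step it takes a different, and more careful, route than the paper.

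The paper first invokes Lemma~\ref{lem:appendix_shifted_candidate_feasible} and Lemma~\ref{lem:appendix_shifted_cost_identity}. These give a feasible shifted candidate with $\tilde J_i^{\mathrm c}(t^+)=\hat J_i^{\mathrm c}(t^+)$. The paper then asserts that ``optimality of the solution at time $t^+$'' gives $J_i^{\mathrm c,\ast}(t^+)\le \tilde J_i^{\mathrm c}(t^+)$, which is the classical value-function comparison. As you note, the FHOCP minimizes $J_i$ in \eqref{prob:local_fhocp_obj}, not $J_i^{\mathrm c}$. So optimality alone does not justify that comparison.

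The inequality $J_i^{\mathrm c,\ast}(t^+)\le \hat J_i^{\mathrm c}(t^+)$ holds because the optimizer at $t^+$ satisfies \eqref{prob:local_fhocp_lyap}, which is exactly what you use. In your version the shifted-candidate lemmas serve only inside Theorem~\ref{thm:rec_feas}. There they guarantee that imposing \eqref{prob:local_fhocp_lyap} with the bound \eqref{eq:lyap_bound_update_main} never renders the problem infeasible. The proposition takes that as a hypothesis, so you correctly leave it out.

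The remaining steps coincide with the paper's: monotonicity, nonnegativity, the telescoping sum, summability, and the passage from $\ell_i^{\mathrm c}\to 0$ to $\|(e_i(t),\delta u_i(t))\|\to 0$. Your appeal to continuity of $\underline{\alpha}_{\ell,i}^{-1}$ at $0$ is an equivalent replacement for the paper's subsequence contradiction.
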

\begin{remark}[Interpretation of Proposition~\ref{prop:lyap_decrease}]
\label{rem:prop_lyap_interpretation}
Proposition~\ref{prop:lyap_decrease} is the precise Lyapunov-type statement available without any additional assumption on the time evolution of the selected contingency equilibrium.
It guarantees monotone decrease of the optimal contingency cost, summability of the stage cost, and asymptotic convergence of the closed-loop trajectory in the equilibrium-tracking coordinates $(e_i(t),\delta u_i(t))$.
\end{remark}
\begin{corollary}[Convergence to a fixed equilibrium]
\label{cor:fixed_ss_convergence}
Suppose that the assumptions of Proposition~\ref{prop:lyap_decrease} hold and that there exists \(\bar t\in\mathbb Z_+\) such that, for all \(t\ge\bar t\), \(\bar x_i^{\mathrm c,\ast}(t)=\bar x_i^{\mathrm c,\star}\) and \(\bar u_i^{\mathrm c,\ast}(t)=\bar u_i^{\mathrm c,\star}\). Then \(x_i(t)\to \bar x_i^{\mathrm c,\star}\) and \(u_i(t)\to \bar u_i^{\mathrm c,\star}\) as \(t\to\infty\).
\end{corollary}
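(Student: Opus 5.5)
The plan is to derive the corollary directly from Proposition~\ref{prop:lyap_decrease}. That proposition already gives $\|(e_i(t),\delta u_i(t))\|\to 0$ as $t\to\infty$, where
\[
e_i(t)=x_i(t)-\bar x_i^{\mathrm c,\ast}(t),\qquad \delta u_i(t)=u_i(t)-\bar u_i^{\mathrm c,\ast}(t).
\]
Under the additional hypothesis, for all $t\ge\bar t$ the selected equilibrium pair equals the constant pair $(\bar x_i^{\mathrm c,\star},\bar u_i^{\mathrm c,\star})$. The errors therefore reduce to
\[
e_i(t)=x_i(t)-\bar x_i^{\mathrm c,\star},\qquad \delta u_i(t)=u_i(t)-\bar u_i^{\mathrm c,\star},\qquad t\ge\bar t.
\]
Convergence of these errors to zero is exactly the claimed $x_i(t)\to\bar x_i^{\mathrm c,\star}$ and $u_i(t)\to\bar u_i^{\mathrm c,\star}$.

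The steps, in order, are as follows. First, invoke the summability statement \eqref{eq:summable_stage_main} to get $\ell_i^{\mathrm c}(e_i(t),\delta u_i(t))\to0$. This relies on the decrease \eqref{eq:lyap_decrease_main} and the lower bound $J_i^{\mathrm c,\ast}\ge 0$, which follows from nonnegativity of $\ell_i^{\mathrm c}$ and $V_i^{\mathrm c}$. Second, apply the lower $\mathcal K_\infty$ bound in \eqref{eq:stage_cost_bounds}, namely $\underline\alpha_{\ell,i}(\|(e,\delta u)\|)\le\ell_i^{\mathrm c}(e,\delta u)$. Since $\underline\alpha_{\ell,i}$ is strictly increasing, continuous and vanishes only at zero, it has a continuous inverse on its range with $\underline\alpha_{\ell,i}^{-1}(s)\to 0$ as $s\to 0$. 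This yields $\|(e_i(t),\delta u_i(t))\|\le\underline\alpha_{\ell,i}^{-1}(\ell_i^{\mathrm c}(e_i(t),\delta u_i(t)))\to0$. Third, restrict to $t\ge\bar t$, substitute the constant equilibrium, and use $\|x_i(t)-\bar x_i^{\mathrm c,\star}\|\le\|(e_i(t),\delta u_i(t))\|$, and likewise for the input.

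The only point needing care is whether the lower bound \eqref{eq:stage_cost_bounds} applies along the closed loop. It is stated only on the closed admissible error set $\overline{\mathcal Z}^{\mathrm e}_i$. The closed-loop pair $(x_i(t),u_i(t))=(x^{\mathrm c,\ast}_{i,(0|t)},u^{\mathrm c,\ast}_{i,(0|t)})$ is admissible by \eqref{prob:local_fhocp_bounds_c}, and the equilibrium pair is admissible by \eqref{prob:local_fhocp_terminal_equilibrium}, so the error lies in that set. Hence the bound applies for every $t$, and in particular on the tail $t\ge\bar t$.

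Given that, there is no real obstacle: the corollary is essentially a rewriting of the last conclusion of Proposition~\ref{prop:lyap_decrease} once the moving equilibrium is frozen. One could also note, as an alternative for $t\ge\bar t$, that $J_i^{\mathrm c,\ast}(t)-V_i^{\mathrm c}(\bar x_i^{\mathrm c,\star},x_i^{\mathrm{ref}})$ is then a standard nonnegative Lyapunov-like quantity. This is not needed, however, since convergence already follows from summability.
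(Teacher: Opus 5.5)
Your proposal is correct and follows essentially the paper's route: take the final conclusion of Proposition~\ref{prop:lyap_decrease}, $\|(e_i(t),\delta u_i(t))\|\to 0$, and substitute the constant pair $(\bar x_i^{\mathrm c,\star},\bar u_i^{\mathrm c,\star})$ for $t\ge\bar t$. Your re-derivation of that norm convergence via $\underline{\alpha}_{\ell,i}^{-1}$ (the paper uses a subsequence contradiction inside the proposition's proof) and your admissibility check are redundant but harmless; strictly, \eqref{prob:local_fhocp_bounds_c} constrains only $x^{\mathrm c}_{i,(k+1|t)}$, so admissibility of $x_i(t)$ comes from the previous step or the standing requirement $x_i(t)\in\mathcal X_i$, not from a $k=0$ state entry.
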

\section{Simulation Study}
\label{sec:sim}
The simulation results presented in the following are illustrated in the supplementary video available at \url{https://youtu.be/uBLV58GRvFw}.
\subsection{Setup}
\label{subsec:sim_setup}

The general framework developed in the previous sections is now instantiated for a specific agent model in order to assess the practical performance of the proposed decentralized contingency MPC scheme.
For the simulation study, each agent is modeled by a discrete-time double integrator in a bounded planar workspace
\(
\mathcal W \subset \mathbb R^2, \ \text{i.e.,} \ n_p = 2.
\)
For each agent $i\in\mathcal I$, the state is given by
\(
x_i(t)=
[
p_i(t)\ \
v_i(t)
]^\top
\in\mathbb R^4,
\ 
p_i(t)\in\mathbb R^2,
\
v_i(t)\in\mathbb R^2,
\)
and the input is
\(
u_i(t)\in\mathbb R^2.
\)
The discrete-time dynamics are
\begin{equation}
x_i(t^+)=f(x_i(t),u_i(t)) = Ax_i(t)+Bu_i(t),
\label{eq:sim_double_integrator}
\end{equation}
with sampling time $T_s=0.1\,\mathrm{s}$ and system matrices
\[
A=
\begin{bmatrix}
I_2 & T_s I_2\\
0_2 & I_2
\end{bmatrix},
\qquad
B=
\begin{bmatrix}
\frac{1}{2}T_s^2 I_2\\
T_s I_2
\end{bmatrix},
\]
where $I_2\in\mathbb R^{2\times 2}$ denotes the identity matrix and $0_2\in\mathbb R^{2\times 2}$ the zero matrix.
The position output is
\(
p_i(t)=C_i x_i(t),
\
C_i=[ I_2 \ \  0_2 ].
\)
Each agent body is approximated by a disc of radius
\(
r_i=0.2\,\mathrm{m}.
\)
State and input bounds are imposed through the velocity and acceleration limits
\(
\|v_i(t)\|\le 3\,\mathrm{m/s},
\
\|u_i(t)\|\le 3.5\,\mathrm{m/s}^2.
\)
The nominal prediction horizon is chosen as \(N_{\mathrm n}=20\). For the
double-integrator model, the contingency horizon \(N_{\mathrm c}\) is
chosen to allow one shared first input followed by admissible braking to
a stopped equilibrium,
\(
\bar x_i^{\mathrm c} =
[
\bar p_i^{\mathrm c} \ \ 
0
]^\top,
\ 
\bar u_i^{\mathrm c}=0.
\)
A sufficient horizon length is
\begin{equation}
N_{\mathrm c}
\ge
1+\left\lceil\frac{v_{\max}}{a_{\max}T_s}\right\rceil .
\label{eq:sim_Nc_choice}
\end{equation}
For the double-integrator model, the active safe-set center in
\eqref{eq:local_safe_set} is chosen as the current position,
\(c_i(t)=p_i(t)\). The corresponding stopping radius is
\[
R_i(t)
=
\frac{1}{2}T_s^2\|u\|
+
T_s\|v\|
+
\frac{\|v+T_su\|^2}{2|a_{\mathrm{min}}|}
+
r_i .
\]
The first two terms bound the displacement during the shared first input
\(u\in\mathcal U\), while the third term bounds the subsequent braking
distance. This particular radius is not essential; any choice satisfying
Assumption~\ref{ass:contingency_recoverability} is admissible.

\subsubsection{MPC Implementation}
All local optimal control problems are implemented in \textsc{CasADi} and solved using \textsc{IPOPT}~\cite{IPOPT}.
Solver tolerances are set to $10^{-5}$ for feasibility and optimality, and the maximum number of iterations is capped at $2000$.
Warm-starting is used throughout.
\label{subsec:mpc_impl}
\subsubsection*{Tracking and equilibrium optimization}
The quadratic objective in the form of 
\begin{align*}
&J_i\big(X_i^{\mathrm{n}}(t), U_i^{\mathrm{n}}(t), \bar{x}^{\mathrm{c}}_i(t), x_i^{\mathrm{ref}}\big)= \\ 
&\sum_{k=0}^{N_{\mathrm n}-1}
\Big(
\|p^{\mathrm n}_{i,(k|t)}-p_i^{\mathrm{ref}}\|_{Q_p}^2
+\|v^{\mathrm n}_{i,(k|t)}\|_{Q_v}^2
+\|u^{\mathrm n}_{i,(k|t)}\|_{R_u}^2
\Big) \\
& \qquad \ +\|p^{\mathrm n}_{i,(N_{\mathrm n}|t)}-p_i^{\mathrm{ref}}\|_{Q_p}^2 +\gamma\,V_i^{\mathrm{c}}\!\big(\bar{x}^{\mathrm{c}}_i(t), x_i^{\mathrm{ref}}\big),
\end{align*}
with diagonal weights
\(
Q_p = Q_v = R_u=I_2,
\)
and
\[
V_i^{\mathrm{c}}\!\big(\bar{x}^{\mathrm{c}}_i(t), x_i^{\mathrm{ref}}\big) =\|\bar x_i^{\mathrm c}(t)-x_i^{\mathrm{ref}}\|_{P_s}^2,\ \
x^{\mathrm{ref}}_i=\begin{bmatrix}p_i^{\mathrm{ref}}&0\end{bmatrix}^\top,
\]
where $P_s = I_4$ and $\gamma=0.1$ is used in the simulations.

\subsubsection*{Contingency cost}
The Lyapunov bound uses the safe-cost functional
\begin{align*}
J_i^{\mathrm c}(t)
=
\sum_{k=0}^{N_{\mathrm c}-1}&
\Big(
\|x^{\mathrm c}_{i,(k|t)}-\bar x_i^{\mathrm c}(t)\|_{Q_s}^2
+\|u^{\mathrm c}_{i,(k|t)}\|_{R_s}^2
\Big) \\
&+\|\bar x_i^{\mathrm c}(t)-x_i^{\mathrm{ref}}\|_{P_s}^2,
\end{align*}
implemented with $Q_s=0.1\cdot I_4$, $R_s= 0.1 \cdot I_2$, and $P_s$ as before.

\subsubsection*{Nominal soft collision-avoidance constraints}
For each other agent $j\neq i$ and each nominal stage $k\in\mathbb{Z}_0^{N_{\mathrm n}-1}$ we introduce nonnegative slacks
$s_{ij,k}\ge 0$, and impose the squared-distance constraint
\begin{equation}
\|p^{\mathrm n}_{i,(k|t)}-c_j(t)\|^2 + s_{ij,k}
\;\ge\;
d_{ij,k}^2,
\label{eq:soft_dist_gate}
\end{equation}
where $c_j(t)$ and $R_j(t)$ denote the \emph{currently reconstructed} active safe-set center and radius of agent $j$, and
\[
d_{ij,k}
:=
R_{\mathrm{s}}\big(x^{\mathrm n}_{i,(k|t)}\big) + R_j(t).
\]
The slack variables are penalized in the nominal objective by
\[
J_i \leftarrow J_i
+
\rho_{\mathrm{nom}}
\sum_{j\in\mathcal I\setminus\{i\}}
\sum_{k=0}^{N_{\mathrm n}-1}
s_{ij,k}.
\]
with $\rho_{\mathrm{nom}}=100$. 
\subsubsection{Increasing agent density}
This scenario class evaluates scalability with respect to agent density. The increasing-density study uses \(M=5,10,20\) agents with randomly
sampled initial and reference positions under a minimum separation. Fig.~\ref{fig:inc_density_traj} shows collision-free
closed-loop trajectories for all densities. The normalized contingency
cost in Fig.~\ref{fig:inc_density_cost} decreases as enforced by
\eqref{prob:local_fhocp_lyap}; Higher densities mainly increase the
transient phase due to stronger local interactions.
\begin{figure}[h]
    \centering
    \begin{minipage}[t]{0.5\linewidth}
        \centering
        \includegraphics[width=\linewidth]{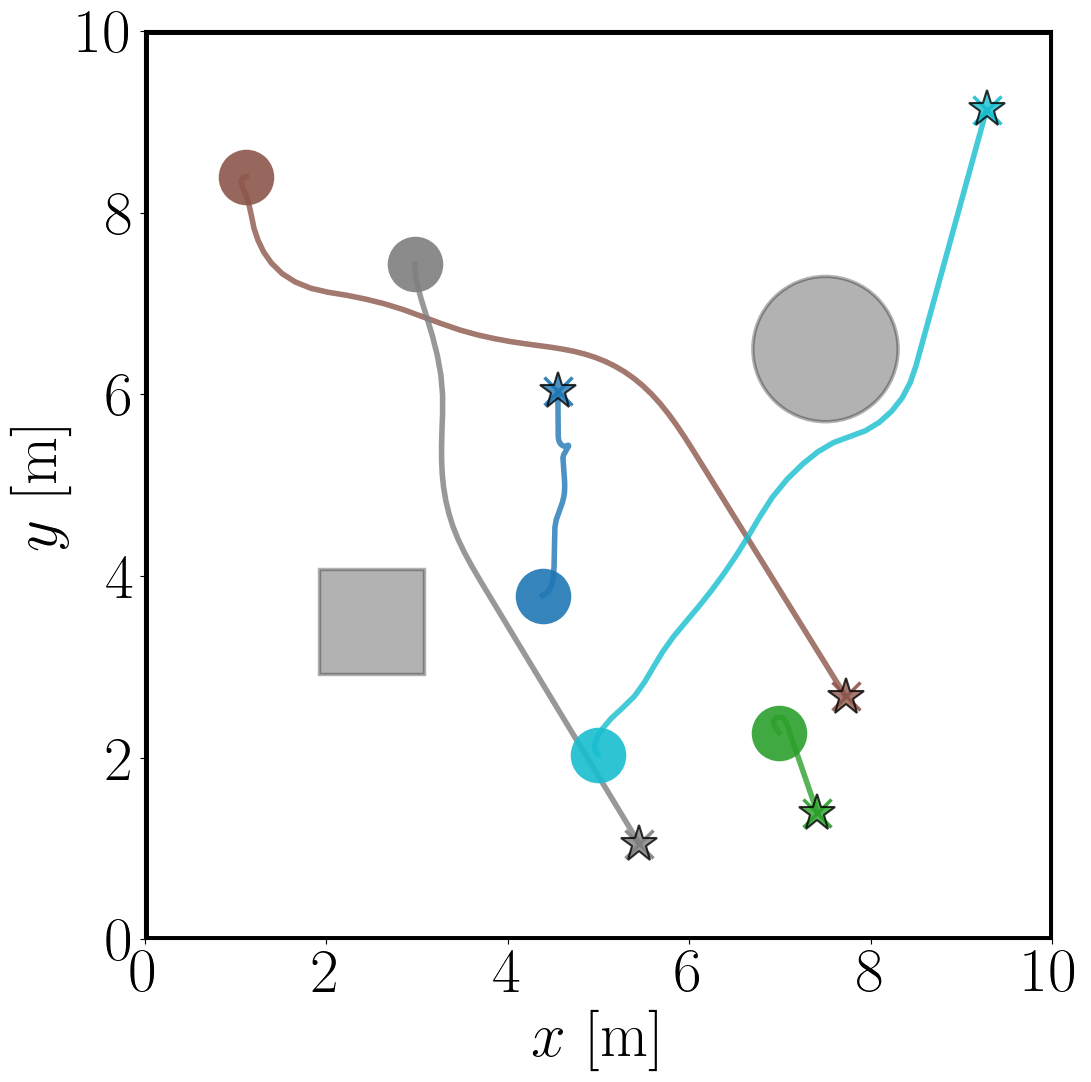}
    \end{minipage}\hfill
    \begin{minipage}[t]{0.5\linewidth}
        \centering
        \includegraphics[width=\linewidth]{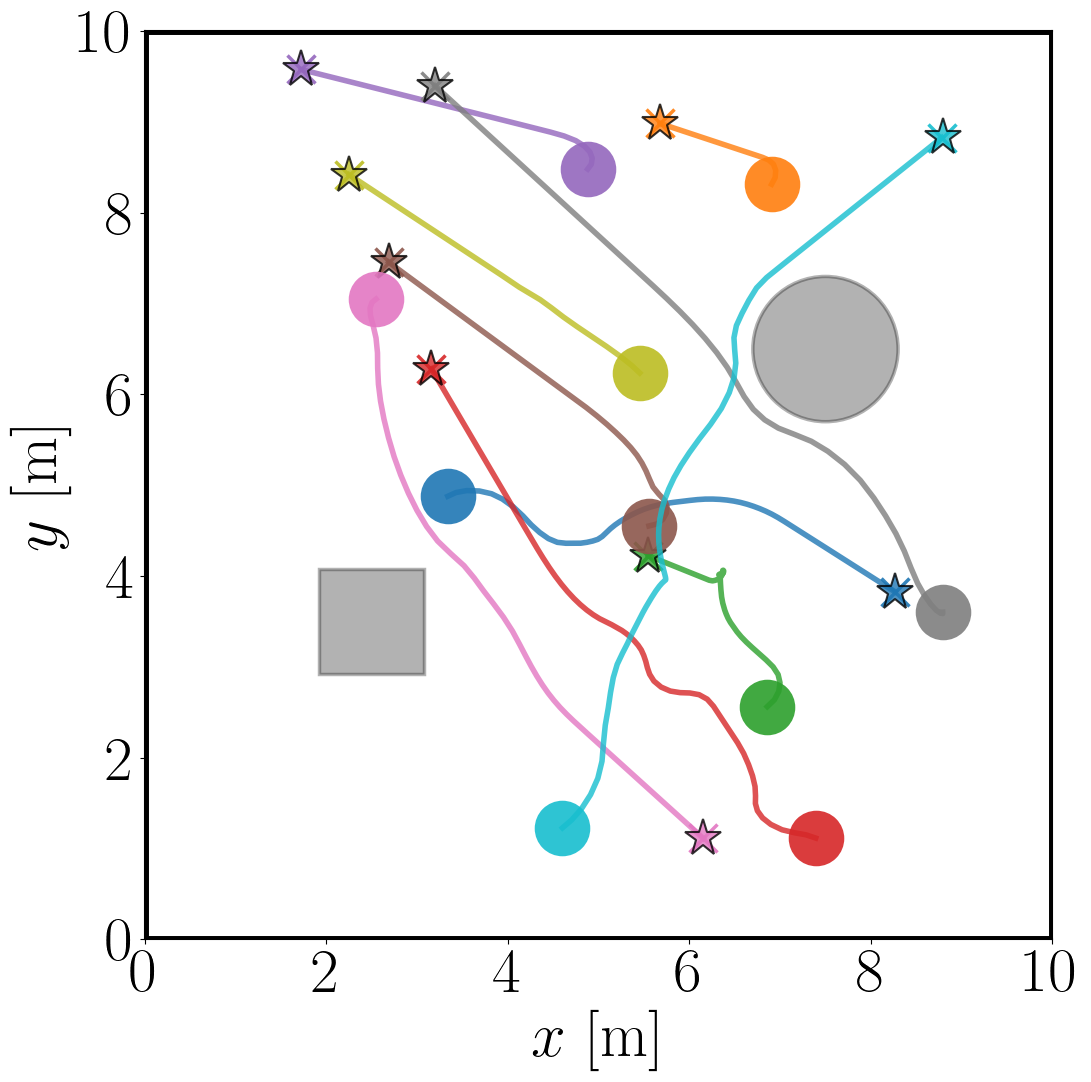}
    \end{minipage}
    \vspace{0pt}
    \begin{minipage}[t]{0.5\linewidth}
        \centering
        \includegraphics[width=\linewidth]{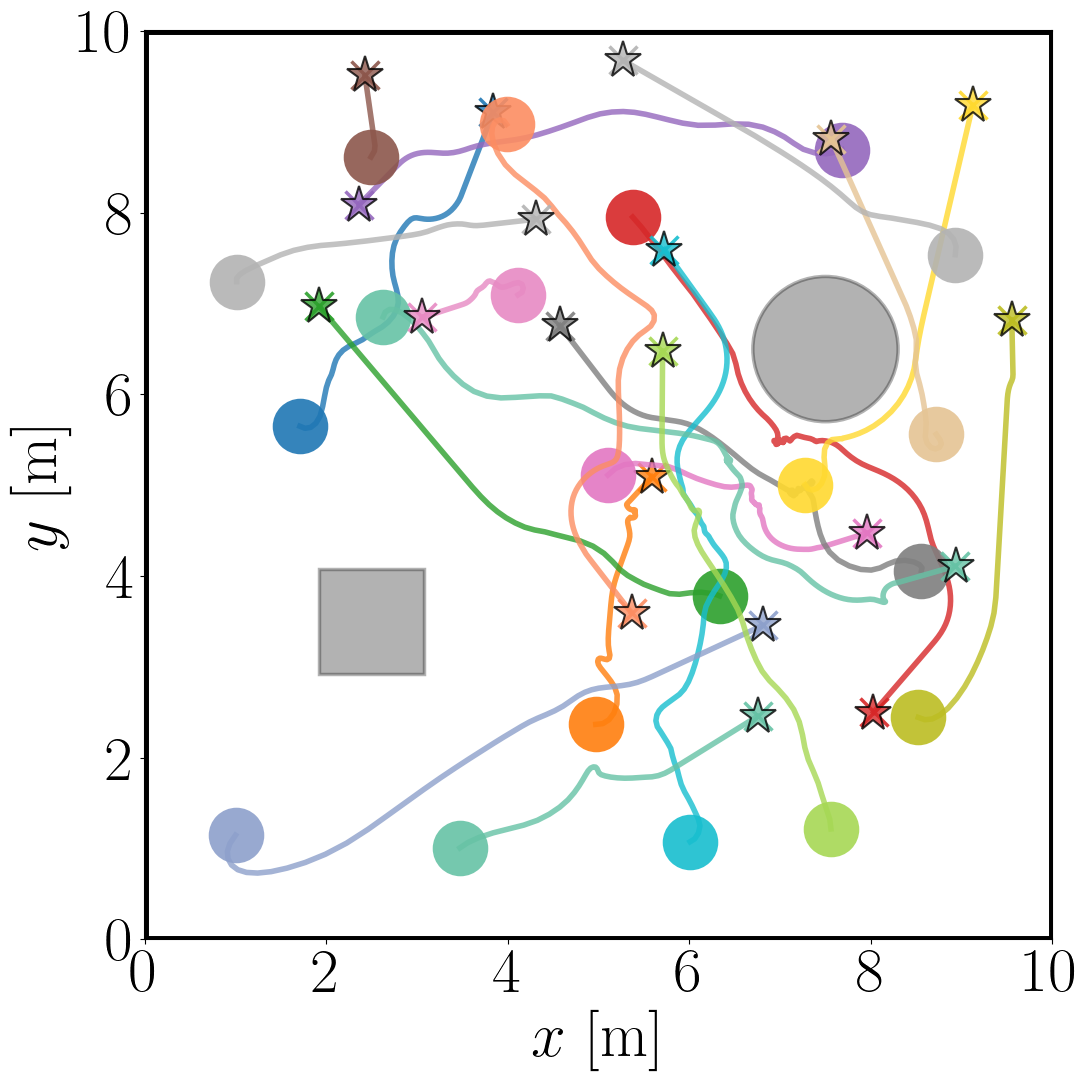}
    \end{minipage}
    \caption{Representative closed-loop trajectories in the increasing-density study: $M=5$ (upper left), $M=10$ (upper right), and $M=20$ (bottom). Filled circles denote initial positions and star markers denote reference positions. The gray objects represent static obstacles. The trajectories remain collision-free in all cases.}
    \label{fig:inc_density_traj}
\end{figure}
\begin{figure}[h!]
        \includegraphics[width=1\linewidth]{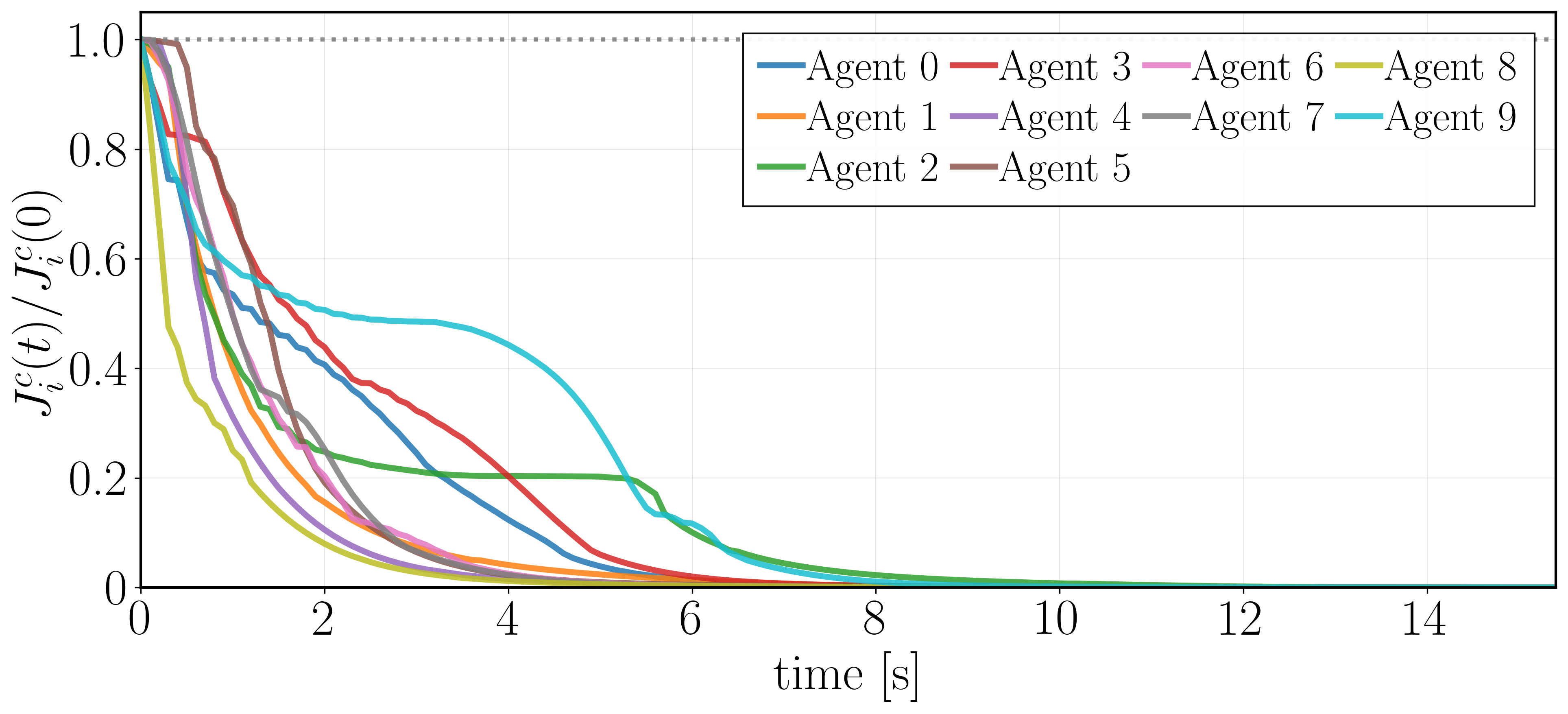}
    \caption{Normalized contingency-cost evolution $J_i^c(t)/J_i^c(0)$ for $M = 10$ agents (see Fig. \ref{fig:inc_density_traj}). The trajectories satisfy the Lyapunov-decreasing behavior induced by the shifted-tail contingency-cost update (constraint \eqref{prob:local_fhocp_lyap}).}
    \label{fig:inc_density_cost}
\end{figure}
\subsubsection{Bottleneck}
The bottleneck scenario stresses conflict resolution in a narrow passage
formed by two static obstacles. Fig.~\ref{fig:FieldBottleneck} shows
that all agents pass the constriction without collisions. The distance
signals and freeze count in Figs.~\ref{fig:AgentDistanceBottleneck}--\ref{fig:FreezeCountBottleneck}
show that close encounters near the passage trigger temporary freeze
events, which prevent unsafe safe-set updates while preserving
closed-loop progress.
\begin{figure}[h]
    \centering
    \includegraphics[width=0.6\linewidth]{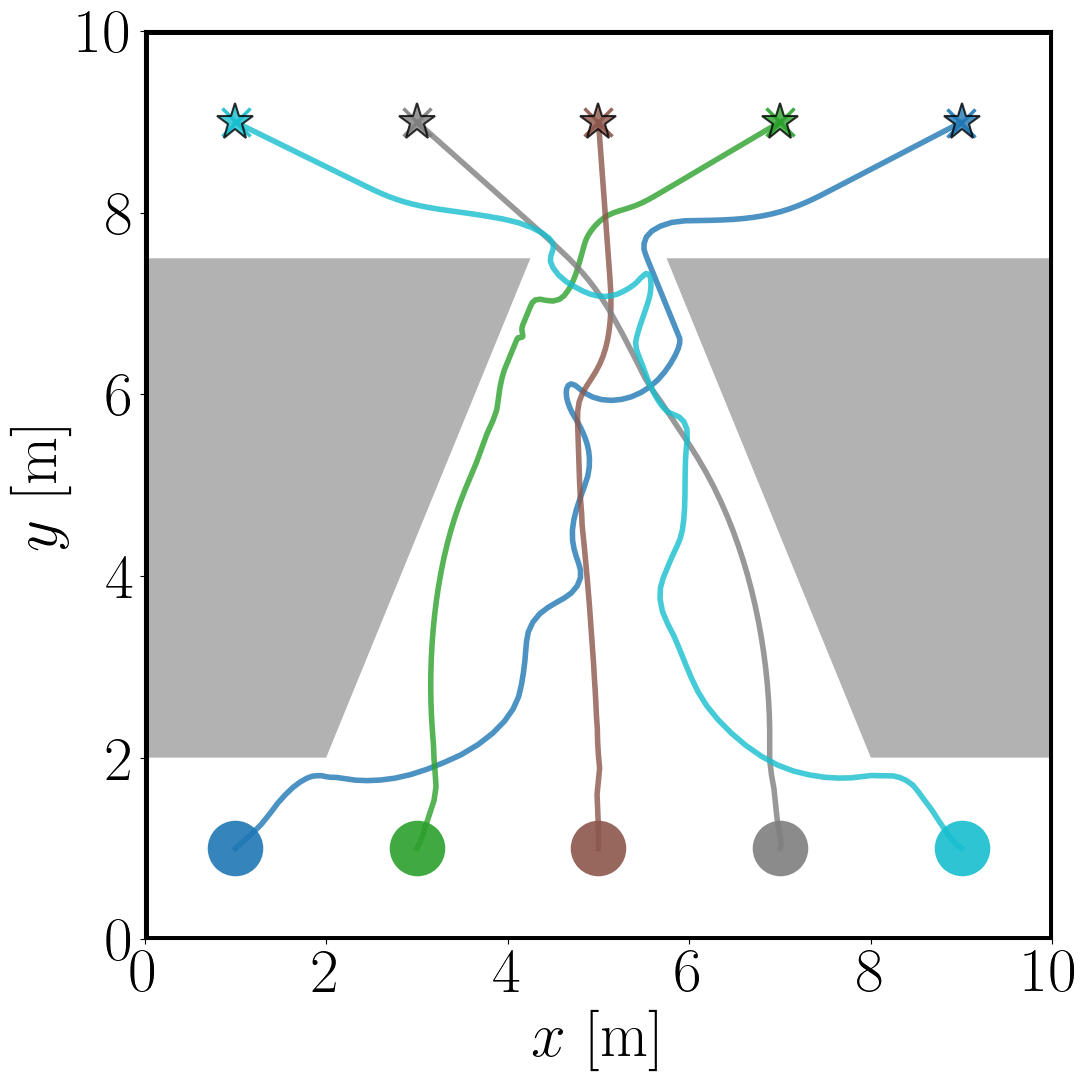}
    \caption{Closed-loop trajectories in the bottleneck scenario. Filled circles denote initial positions and star markers denote target positions. The gray obstacles create a narrow passage that induces strong local interactions.}
    \label{fig:FieldBottleneck}
\end{figure}
\begin{figure}[h]
    \centering
    \includegraphics[width=0.9\linewidth]{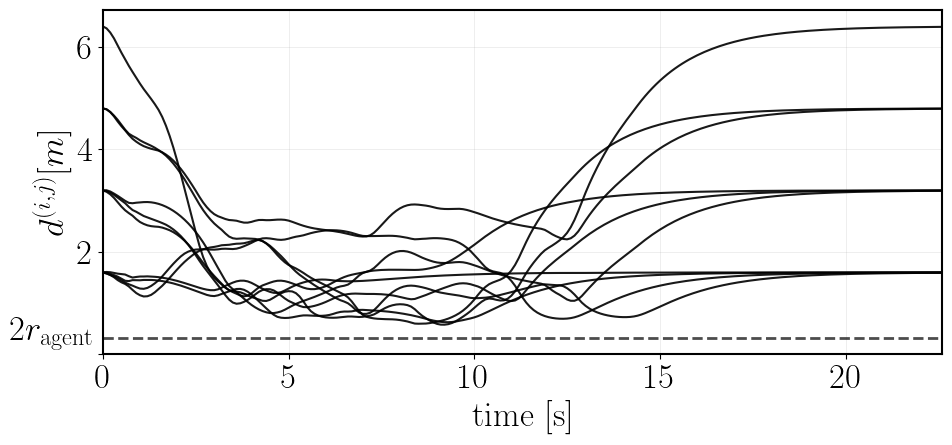}\\[-1.2em]
    \includegraphics[width=0.9\linewidth]{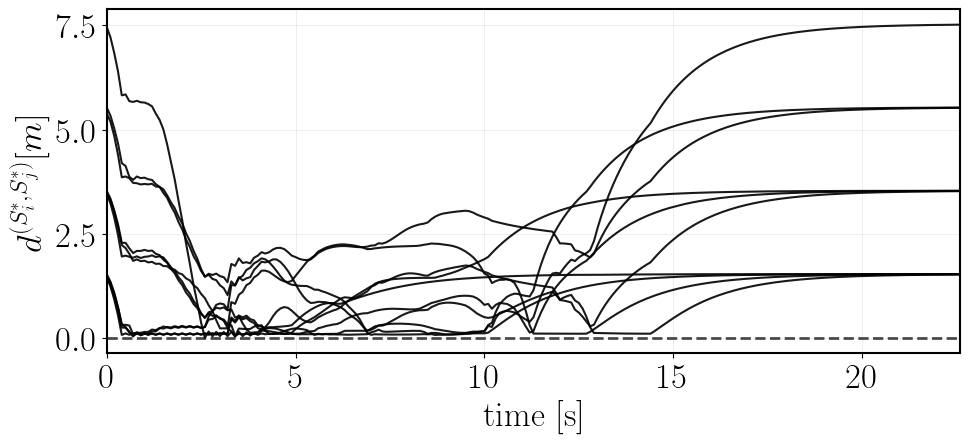}
    \caption{Distance evolution in the \emph{bottleneck} scenario. Top: pairwise distances between agents; the dashed line indicates the minimum admissible distance. Bottom: distances between agent safe sets; values close to zero indicate strong interaction and freeze-operator activation.}
    \label{fig:AgentDistanceBottleneck}
\end{figure}
\begin{figure}[h]
    \centering
    \includegraphics[width=0.9\linewidth]{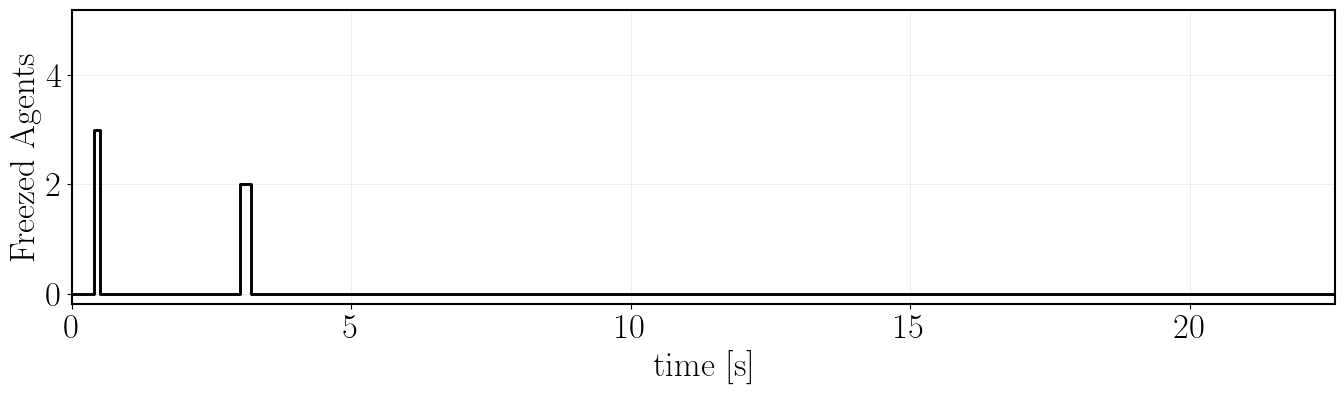}
    \caption{Number of frozen agents over time in the \emph{bottleneck} scenario. The freeze events are sparse and temporary.}
    \label{fig:FreezeCountBottleneck}
\end{figure}
\subsubsection{PnP}
The \emph{PnP} experiment evaluates the proposed scheme under online changes in the set of active agents. Starting from a feasible multi-agent configuration, three additional agents join the workspace during closed-loop operation and are initialized according to the join protocol in Section~\ref{subsec:plug_and_play}. Later, a different agent leaves the workspace and is removed from the locally reconstructed constraints of the remaining agents.

Fig.~\ref{fig:PlugAndPlayFigure} summarizes the experiment. The results illustrate that insertion and removal can be handled without trajectory communication and without modifying the local MPC problem structure of the remaining agents. In particular, the deterministic \textit{FoS} update and the (history-free) reconstruction at the join time allow the scheme to preserve collision avoidance and feasibility despite the changing agent configuration.

\begin{figure}[h!]
    \centering
    \includegraphics[width=0.9\linewidth]{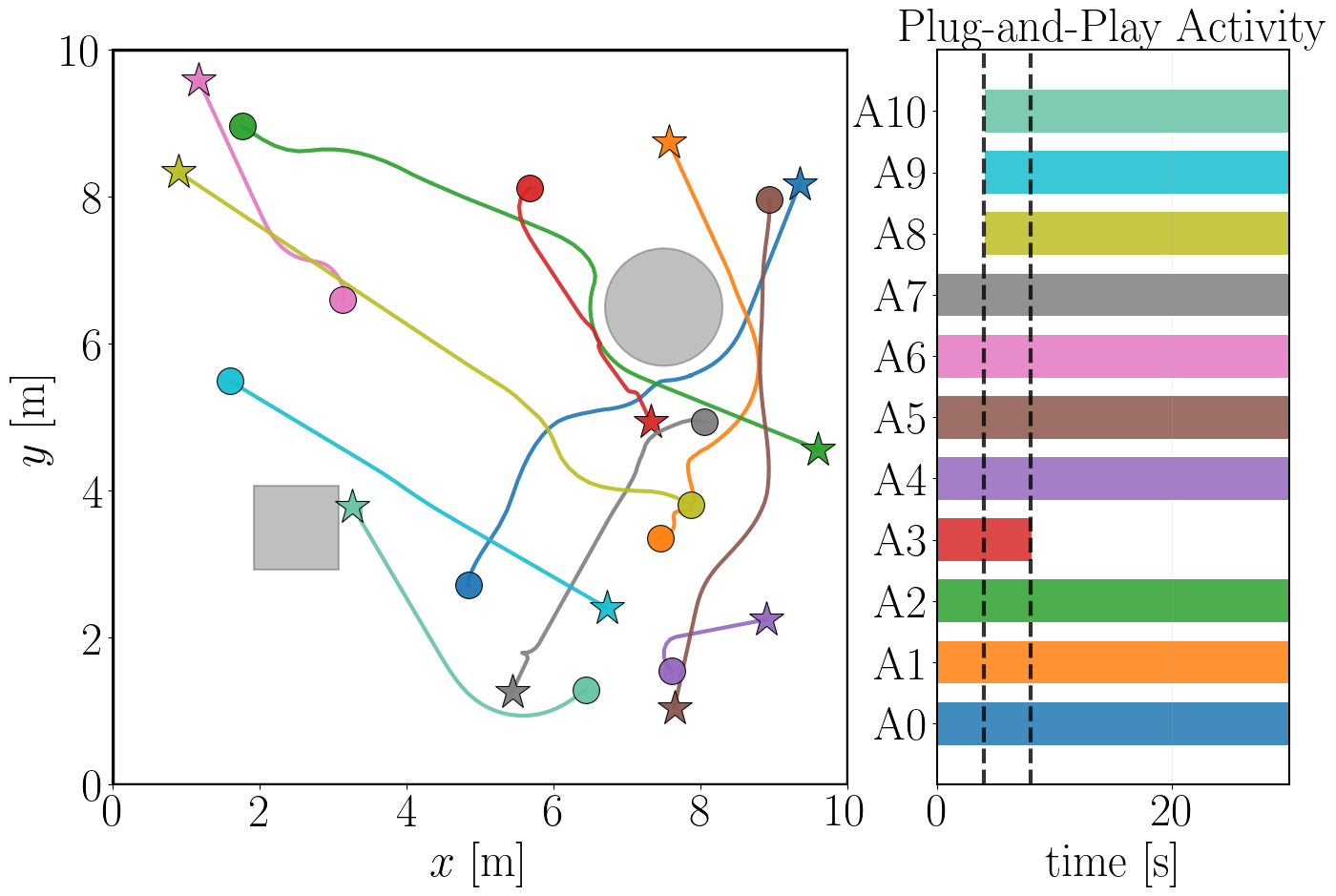}
    \caption{\emph{PnP} scenario with online agent insertion and removal. \textbf{Left:} Closed-loop trajectories in a cluttered workspace (gray obstacles). Filled circles denote initial positions and stars denote final positions; colors correspond to individual agents. \textbf{Right:} Activity timeline indicating when each agent is active/inactive; vertical dashed lines mark join/leave events. The proposed scheme maintains safe, collision-free motion throughout despite the changing set of active agents.}
    \label{fig:PlugAndPlayFigure}
\end{figure}
\section{Conclusions and outlook}
The proposed decentralized contingency MPC scheme provides a provably safe mechanism for local interaction in strongly coupled multi-agent systems without trajectory exchange or communication. Safety is ensured through local constraint reconstruction and the deterministic \emph{FoS} update of the active safe sets. Recursive feasibility, collision-free closed-loop execution, and Lyapunov-type convergence follow from the shifted-tail construction and the associated decrease condition. The simulations indicate moderate conservatism even in dense and cluttered environments.

The safe-set principle is also aligned with intuitive behavior in traffic: a driver maintains a safety margin such that an emergency maneuver remains feasible. Similarly, the local safe sets ensure that a feasible contingency action remains available during multi-agent interaction.

Future work should address disturbances, model mismatch, sensing noise or delays in reconstructed neighbor safe sets, and heterogeneous agent dynamics and constraints. Another direction is a memoryless safe-set update, enabling active safe sets or tight outer approximations to be reconstructed from instantaneous information only. This would simplify decentralized implementation and reduce the conservatism of \emph{PnP} operation.
\appendix


\appendix
\numberwithin{equation}{section}

\makeatletter
\renewcommand\@seccntformat[1]{%
  \csname the#1\endcsname.\quad
}
\makeatother
\section*{Appendix}
\section{Auxiliary shifted-tail lemmas}

The recursive-feasibility and Lyapunov-type arguments both rely on the same shifted contingency candidate. To avoid duplicate arguments and cross-references between proofs, the two key shifted-tail facts are collected first.

\begin{lemma}[Feasibility of the shifted candidate]
\label{lem:appendix_shifted_candidate_feasible}
Fix an agent $i\in\mathcal I$ and a time $t\in\mathbb Z_+$ at which the local FHOCP~\eqref{prob:local_fhocp} is feasible. Let
\begin{align*}
\Big(
&\{x_{i,(k|t)}^{\mathrm n,\ast}\}_{k=0}^{N_{\mathrm n}},
\{u_{i,(k|t)}^{\mathrm n,\ast}\}_{k=0}^{N_{\mathrm n}-1},
\{x_{i,(k|t)}^{\mathrm c,\ast}\}_{k=0}^{N_{\mathrm c}},
\{u_{i,(k|t)}^{\mathrm c,\ast}\}_{k=0}^{N_{\mathrm c}-1},\\
&\qquad \qquad \qquad \bar x_i^{\mathrm c,\ast}(t),
\bar u_i^{\mathrm c,\ast}(t)
\Big)
\end{align*}
be an optimal feasible solution. Define, at time $t^+$,
\begin{align}
\tilde x_{i,(k|t^+)}^{\mathrm c}
&:=x_{i,(k+1|t)}^{\mathrm c,\ast},
\qquad k=0,\dots,N_{\mathrm c}-1,
\label{eq:app_shift_x}
\\
\tilde u_{i,(k|t^+)}^{\mathrm c}
&:=u_{i,(k+1|t)}^{\mathrm c,\ast},
\qquad k=0,\dots,N_{\mathrm c}-2,
\label{eq:app_shift_u}
\\
\tilde u_{i,(N_{\mathrm c}-1|t^+)}^{\mathrm c}
&:=\bar u_i^{\mathrm c,\ast}(t),
\label{eq:app_shift_u_terminal}
\\
\tilde{\bar x}_i^{\mathrm c}(t^+)
&:=\bar x_i^{\mathrm c,\ast}(t),
\qquad
\tilde{\bar u}_i^{\mathrm c}(t^+):=\bar u_i^{\mathrm c,\ast}(t).
\label{eq:app_shift_bar}
\end{align}
Then the candidate
\[
\Big(
\{\tilde x_{i,(k|t^+)}^{\mathrm c}\}_{k=0}^{N_{\mathrm c}},
\{\tilde u_{i,(k|t^+)}^{\mathrm c}\}_{k=0}^{N_{\mathrm c}-1},
\tilde{\bar x}_i^{\mathrm c}(t^+),
\tilde{\bar u}_i^{\mathrm c}(t^+)
\Big)
\]
satisfies the contingency-part constraints
\eqref{prob:local_fhocp_init},
\eqref{prob:local_fhocp_dyn_c},
\eqref{prob:local_fhocp_bounds_c},
\eqref{prob:static_obstacle_constraintsMPC},
\eqref{prob:local_fhocp_terminal_state}--\eqref{prob:local_fhocp_terminal_position},
\eqref{prob:local_fhocp_containment},
and \eqref{prob:local_fhocp_tail} at time $t^+$.
\end{lemma}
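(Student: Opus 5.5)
The plan is to verify the constraints one at a time, using only the optimal solution at time $t$, exact dynamics (Assumption~\ref{ass:exact_dynamics}) and the two branches of the \textit{FoS} rule~\eqref{eq:freeze_shift_update}. First I would complete the candidate at the terminal index by setting $\tilde x^{\mathrm c}_{i,(N_{\mathrm c}|t^+)}:=\bar x_i^{\mathrm c,\ast}(t)$. This is forced by \eqref{eq:app_shift_x} together with $x^{\mathrm c,\ast}_{i,(N_{\mathrm c}|t)}=\bar x_i^{\mathrm c,\ast}(t)$ from \eqref{prob:local_fhocp_terminal_state}.

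\textbf{Model-only constraints.} These are routine.
\begin{itemize}
\item[(i)] For \eqref{prob:local_fhocp_init}: the applied input is $u_i(t)=u^{\mathrm c,\ast}_{i,(0|t)}$. By Assumption~\ref{ass:exact_dynamics}, $x_i(t^+)=f_i(x_i(t),u^{\mathrm c,\ast}_{i,(0|t)})=x^{\mathrm c,\ast}_{i,(1|t)}=\tilde x^{\mathrm c}_{i,(0|t^+)}$.
\item[(ii)] For \eqref{prob:local_fhocp_dyn_c}: steps $k\le N_{\mathrm c}-2$ are shifted copies of the dynamics at time $t$. The last step reads $f_i(\bar x_i^{\mathrm c,\ast},\bar u_i^{\mathrm c,\ast})=\bar x_i^{\mathrm c,\ast}$, which is the equilibrium property \eqref{prob:local_fhocp_terminal_equilibrium}.
\item[(iii)] For \eqref{prob:local_fhocp_bounds_c} and \eqref{prob:static_obstacle_constraintsMPC}: all states, inputs and positions are either shifted optimal ones or equal to $(\bar x_i^{\mathrm c,\ast},\bar u_i^{\mathrm c,\ast})\in\mathcal X_i\times\mathcal U_i$. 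The obstacle condition at the terminal position was already imposed at $k=N_{\mathrm c}$ at time $t$.
\item[(iv)] For \eqref{prob:local_fhocp_terminal_state}--\eqref{prob:local_fhocp_terminal_equilibrium}: these hold immediately from the choice of $\tilde{\bar x}_i^{\mathrm c}(t^+)$ and $\tilde{\bar u}_i^{\mathrm c}(t^+)$.
\end{itemize}

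\textbf{Tail containment \eqref{prob:local_fhocp_tail} at $t^+$.} This is a re-indexing.
\begin{itemize}
\item For $k\le N_{\mathrm c}-1$ and $l\ge k$, the pair $(k,l)$ at $t^+$ corresponds to $(k+1,l+1)$ at $t$. Here $l+1=N_{\mathrm c}+1$ is replaced by the repeated terminal state, whose position equals that at index $N_{\mathrm c}$. So the constraint is the instance $(k+1,\min\{l+1,N_{\mathrm c}\})$ of \eqref{prob:local_fhocp_tail} at $t$.
\item For $k=N_{\mathrm c}$, the constraint concerns $\bar x_i^{\mathrm c,\ast}$ inside $\Gamma_i(\bar x_i^{\mathrm c,\ast})$. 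This is the instance $k=l=N_{\mathrm c}$ at time $t$.
\end{itemize}

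\textbf{Safe-set constraints \eqref{prob:local_fhocp_containment} and \eqref{prob:local_fhocp_terminal_position}.} These depend on the new active set $S_i^\ast(t^+)$, so I split according to $\chi_i(t)$.
\begin{itemize}
\item \emph{Freeze} ($\chi_i(t)=1$): $(c_i(t^+),R_i(t^+))=(c_i(t),R_i(t))$. Every candidate position is a position of the optimal contingency plan at $t$, so \eqref{prob:local_fhocp_containment} and \eqref{prob:local_fhocp_terminal_position} at $t$ carry over verbatim.
\item \emph{Shift} ($\chi_i(t)=0$): $S_i^\ast(t^+)=\Gamma_i(x_i(t^+))=\Gamma_i(x^{\mathrm c,\ast}_{i,(1|t)})$. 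Then \eqref{prob:local_fhocp_tail} at time $t$ with $k=1$ and $l=1,\dots,N_{\mathrm c}$ gives exactly \eqref{prob:local_fhocp_containment} at $t^+$. The case $l=N_{\mathrm c}$ gives $\mathbb B(\bar p_i^{\mathrm c},r_i)\subseteq\Gamma_i(x_i(t^+))$, hence $C_i\bar x_i^{\mathrm c,\ast}\in S_i^\ast(t^+)$, which is \eqref{prob:local_fhocp_terminal_position}.
\end{itemize}

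\textbf{Expected obstacle.} I expect the shift branch to be the main difficulty. It is the only place where the tail constraint is genuinely needed, and the index bookkeeping must be done carefully: the appended terminal state must be handled, and the new active set must be identified with $\Gamma_i$ evaluated at the measured successor state. Both points rest on exact dynamics and the shared first input.
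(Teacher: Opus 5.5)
Your proposal is correct and follows essentially the same route as the paper. Both use the shift-and-append-equilibrium candidate, routine checks of the model constraints, the re-indexing $(k,l)\mapsto(k+1,\min\{l+1,N_{\mathrm c}\})$ for \eqref{prob:local_fhocp_tail}, and a freeze/shift split whose shift branch invokes \eqref{prob:local_fhocp_tail} at time $t$ with $k=1$ together with $x_i(t^+)=x^{\mathrm c,\ast}_{i,(1|t)}$. The only cosmetic difference is that you set $\tilde x^{\mathrm c}_{i,(N_{\mathrm c}|t^+)}:=\bar x_i^{\mathrm c,\ast}(t)$ and check the last dynamics step via the equilibrium property, whereas the paper defines this state through the dynamics and derives the equality.
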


\begin{proof}
By feasibility at time $t$,
\begin{equation}
x_{i,(N_{\mathrm c}|t)}^{\mathrm c,\ast}
=
\bar x_i^{\mathrm c,\ast}(t),
\
\bar x_i^{\mathrm c,\ast}(t)
=
f_i\!\bigl(\bar x_i^{\mathrm c,\ast}(t),\bar u_i^{\mathrm c,\ast}(t)\bigr).
\label{eq:app_terminal_eq}
\end{equation}
The applied input is
\(
u_i(t)=u_{i,(0|t)}^{\mathrm n,\ast}=u_{i,(0|t)}^{\mathrm c,\ast}.
\)
Hence, by Assumption~\ref{ass:exact_dynamics},
\begin{equation}
x_i(t^+)
=
f_i\!\bigl(x_{i,(0|t)}^{\mathrm c,\ast},u_{i,(0|t)}^{\mathrm c,\ast}\bigr)
=
x_{i,(1|t)}^{\mathrm c,\ast}.
\label{eq:app_successor_state}
\end{equation}
We first define the terminal candidate state by
\begin{equation}
\tilde x_{i,(N_{\mathrm c}|t^+)}^{\mathrm c}
:=
f_i\!\bigl(
\tilde x_{i,(N_{\mathrm c}-1|t^+)}^{\mathrm c},
\tilde u_{i,(N_{\mathrm c}-1|t^+)}^{\mathrm c}
\bigr).
\label{eq:app_shift_terminal_state}
\end{equation}
Using \eqref{eq:app_shift_x}, \eqref{eq:app_shift_u_terminal}, and \eqref{eq:app_terminal_eq},
\begin{equation}
\begin{aligned}
\tilde x_{i,(N_{\mathrm c}|t^+)}^{\mathrm c}
&=
f_i\bigl(x_{i,(N_{\mathrm c}|t)}^{\mathrm c,\ast},\bar u_i^{\mathrm c,\ast}(t)\bigr)\\
&=
f_i\bigl(\bar x_i^{\mathrm c,\ast}(t),\bar u_i^{\mathrm c,\ast}(t)\bigr)\\
&=
\bar x_i^{\mathrm c,\ast}(t)
=
\tilde{\bar x}_i^{\mathrm c}(t^+).
\label{eq:app_terminal_state_equals_bar}
\end{aligned}
\end{equation}
\smallskip
\noindent\textbf{Initialization and dynamics.}
By \eqref{eq:app_successor_state} and \eqref{eq:app_shift_x},
\[
\tilde x_{i,(0|t^+)}^{\mathrm c}
=
x_{i,(1|t)}^{\mathrm c,\ast}
=
x_i(t^+),
\]
so \eqref{prob:local_fhocp_init} holds for the contingency part.
For $k=0,\dots,N_{\mathrm c}-2$, feasibility at time $t$ gives
\begin{align*}
\tilde x_{i,(k+1|t^+)}^{\mathrm c}
=
x_{i,(k+2|t)}^{\mathrm c,\ast}
&=
f_i\!\bigl(x_{i,(k+1|t)}^{\mathrm c,\ast},u_{i,(k+1|t)}^{\mathrm c,\ast}\bigr)
\\
&=
f_i\!\bigl(\tilde x_{i,(k|t^+)}^{\mathrm c},\tilde u_{i,(k|t^+)}^{\mathrm c}\bigr),
\end{align*}
Thus \eqref{prob:local_fhocp_dyn_c} holds for $k=0,\dots,N_{\mathrm c}-2$, and the case $k=N_{\mathrm c}-1$ is given by \eqref{eq:app_shift_terminal_state}.

\smallskip
\noindent\textbf{State, input, and static-obstacle constraints.}
For $k=0,\dots,N_{\mathrm c}-2$,
\[
\tilde x_{i,(k|t^+)}^{\mathrm c}
=
x_{i,(k+1|t)}^{\mathrm c,\ast}\in\mathcal X_i,
\qquad
\tilde u_{i,(k|t^+)}^{\mathrm c}
=
u_{i,(k+1|t)}^{\mathrm c,\ast}\in\mathcal U_i
\]
by feasibility at time $t$.
For the last stage,
\[
\tilde x_{i,(N_{\mathrm c}-1|t^+)}^{\mathrm c}
=
x_{i,(N_{\mathrm c}|t)}^{\mathrm c,\ast}
=
\bar x_i^{\mathrm c,\ast}(t)\in\mathcal X_i,
\]
and
\[
\tilde u_{i,(N_{\mathrm c}-1|t^+)}^{\mathrm c}
=
\bar u_i^{\mathrm c,\ast}(t)\in\mathcal U_i.
\]
The same index shift shows that
\[
h\bigl(\tilde p_{i,(k|t^+)}^{\mathrm c}\bigr)\ge 0,
\qquad \forall k\in\mathbb Z_0^{N_{\mathrm c}},
\]
where $\tilde p_{i,(k|t^+)}^{\mathrm c}:=C_i\tilde x_{i,(k|t^+)}^{\mathrm c}$.

\smallskip
\noindent\textbf{Terminal constraints.}
Equation \eqref{eq:app_terminal_state_equals_bar} yields
\[
\tilde x_{i,(N_{\mathrm c}|t^+)}^{\mathrm c}
=
\tilde{\bar x}_i^{\mathrm c}(t^+),
\]
which is \eqref{prob:local_fhocp_terminal_state}.
Moreover, by \eqref{eq:app_shift_bar} and \eqref{eq:app_terminal_eq},
\[
\tilde{\bar x}_i^{\mathrm c}(t^+)
=
f_i\!\bigl(\tilde{\bar x}_i^{\mathrm c}(t^+),\tilde{\bar u}_i^{\mathrm c}(t^+)\bigr),
\]
so \eqref{prob:local_fhocp_terminal_equilibrium} holds.
Finally, feasibility at time $t$ implies
\[
C_i\bar x_i^{\mathrm c,\ast}(t)\in S_i^\ast(t).
\]
If $\chi_i(t)=1$, then $S_i^\ast(t^+)=S_i^\ast(t)$ by \eqref{eq:freeze_shift_update}, so
\[
C_i\tilde{\bar x}_i^{\mathrm c}(t^+)\in S_i^\ast(t^+).
\]
If $\chi_i(t)=0$, then $S_i^\ast(t^+)=\tilde S_i(t^+)=\Gamma_i(x_i(t^+))$, and the tail-containment argument below with $k=1$ already implies
\[
C_i\tilde{\bar x}_i^{\mathrm c}(t^+)\in S_i^\ast(t^+).
\]
Hence \eqref{prob:local_fhocp_terminal_position} holds.

\smallskip
\noindent\textbf{Active safe-set containment.}
Active safe-set containment~\eqref{prob:local_fhocp_containment}.
It remains to show that the shifted contingency candidate satisfies the active safe-set containment constraint at time~$t^+$.
We distinguish two cases.

\emph{Case 1:} $\chi_i(t)=1$.
Then, by the FoS update rule~\eqref{eq:freeze_shift_update}, the active safe set is frozen, i.e.,
\(
S_i^\ast(t^+) = S_i^\ast(t).
\)
Since the contingency trajectory at time~$t$ was feasible and satisfied~\eqref{prob:local_fhocp_containment} in~$S_i^\ast(t)$, the shifted tail remains contained in the same set.
Hence,~\eqref{prob:local_fhocp_containment} holds at time~$t^+$.

\emph{Case 2:} $\chi_i(t)=0$.
Then the active safe set is updated to the state-induced candidate set,
\(
S_i^\ast(t^+) = \Gamma_i\!\bigl(x_i(t^+)\bigr).
\)
By exact dynamics and the shared first input,
\[
x_i(t^+) = x_{i,(1|t)}^{c,\ast}.
\]
Therefore, the new active safe set is exactly the one induced by the first predicted contingency successor state.
Since the feasible solution at time~$t$ satisfies the tail-containment constraint~\eqref{prob:local_fhocp_tail}, the entire shifted contingency tail is contained in that induced safe set.
Hence,~\eqref{prob:local_fhocp_containment} also holds at time~$t^+$.

\emph{Tail-containment constraint~\eqref{prob:local_fhocp_tail}.}
Let arbitrary
\(
k\in\mathbb{Z}_{0}^{N_c}, \ l\in\mathbb{Z}_{k}^{N_c}
\)
be given for the candidate at time~$t^+$.
It must be shown that
\[
\bigl\|\tilde p^{\mathrm c}_{i,(l|t^+)}
-
c_i\!\bigl(\tilde x^{\mathrm c}_{i,(k|t^+)}\bigr)\bigr\|
\le
R_i\!\bigl(\tilde x^{\mathrm c}_{i,(k|t^+)}\bigr)-r_i.
\]
For $k\le N_c-1$ and $l\le N_c-1$, the shifted candidate satisfies
\[
\tilde x^{\mathrm c}_{i,(l|t^+)} = x^{\mathrm c,\ast}_{i,(l+1|t)},
\qquad
\tilde x^{\mathrm c}_{i,(k|t^+)} = x^{\mathrm c,\ast}_{i,(k+1|t)}.
\]
Since the safe-set generator is deterministic, this implies
\[
c_i\!\bigl(\tilde x^{\mathrm c}_{i,(k|t^+)}\bigr)
=
c_i\!\bigl(x^{\mathrm c,\ast}_{i,(k+1|t)}\bigr),
\ 
R_i\!\bigl(\tilde x^{\mathrm c}_{i,(k|t^+)}\bigr)
=
R_i\!\bigl(x^{\mathrm c,\ast}_{i,(k+1|t)}\bigr).
\]
Moreover, $l\in\mathbb{Z}_{k}^{N_c-1}$ implies $l+1\in\mathbb{Z}_{k+1}^{N_c}$.
Hence, feasibility of the solution at time~$t$ and constraint~\eqref{prob:local_fhocp_tail} yield
\[
\bigl\|p^{\mathrm c,\ast}_{i,(l+1|t)}
-
c_i\!\bigl(x^{\mathrm c,\ast}_{i,(k+1|t)}\bigr)\bigr\|
\le
R_i\!\bigl(x^{\mathrm c,\ast}_{i,(k+1|t)}\bigr)-r_i,
\]
which is exactly the desired inequality.

For $k\le N_c-1$ and $l=N_c$, we have
\[
\tilde x^{\mathrm c}_{i,(N_c|t^+)}
=
\tilde{\bar x}^{\mathrm c}_i(t^+)
=
\bar x^{\mathrm c,\ast}_i(t)
=
x^{\mathrm c,\ast}_{i,(N_c|t)}.
\]
Since now $N_c\in\mathbb{Z}_{k+1}^{N_c}$, feasibility at time~$t$ again gives
\[
\bigl\|\tilde p^{\mathrm c}_{i,(N_c|t^+)}
-
c_i\!\bigl(\tilde x^{\mathrm c}_{i,(k|t^+)}\bigr)\bigr\|
\le
R_i\!\bigl(\tilde x^{\mathrm c}_{i,(k|t^+)}\bigr)-r_i.
\]

Finally, for $k=N_c$, necessarily $l=N_c$, and the claim follows directly from
\[
\tilde x^{\mathrm c}_{i,(N_c|t^+)} = x^{\mathrm c,\ast}_{i,(N_c|t)}
\]
together with feasibility at time~$t$ for the index pair $(k,l)=(N_c,N_c)$.

Therefore, the shifted candidate satisfies
\eqref{prob:local_fhocp_tail} at time~$t^+$.
\end{proof}
\begin{lemma}[Shifted contingency-cost identity]
\label{lem:appendix_shifted_cost_identity}
Under the hypotheses of Lemma~\ref{lem:appendix_shifted_candidate_feasible}, the shifted contingency candidate defined in
\eqref{eq:app_shift_x}--\eqref{eq:app_shift_bar} satisfies
\begin{equation}
\tilde J_i^{\mathrm c}(t^+)
=
\hat J_i^{\mathrm c}(t^+),
\label{eq:app_candidate_equals_bound}
\end{equation}
where $\hat J_i^{\mathrm c}(t^+)$ is given by \eqref{eq:lyap_bound_update_main}.
\end{lemma}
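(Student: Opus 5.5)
The identity follows by writing out the contingency cost \eqref{eq:c_cost_def} for the shifted candidate of Lemma~\ref{lem:appendix_shifted_candidate_feasible} and comparing it term by term with the optimal cost at time $t$. By \eqref{eq:app_shift_bar}, the candidate keeps the same terminal pair, $\tilde{\bar x}_i^{\mathrm c}(t^+)=\bar x_i^{\mathrm c,\ast}(t)$ and $\tilde{\bar u}_i^{\mathrm c}(t^+)=\bar u_i^{\mathrm c,\ast}(t)$. Hence the offset term $V_i^{\mathrm c}(\cdot,x_i^{\mathrm{ref}})$ is identical in $\tilde J_i^{\mathrm c}(t^+)$ and $J_i^{\mathrm c,\ast}(t)$. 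Every stage cost is also evaluated relative to the same equilibrium pair at both times.

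Next I split the candidate's stage sum into $k=0,\dots,N_{\mathrm c}-2$ and the final term $k=N_{\mathrm c}-1$. For $k\le N_{\mathrm c}-2$, \eqref{eq:app_shift_x}--\eqref{eq:app_shift_u} give
\[
\ell_i^{\mathrm c}\bigl(\tilde x^{\mathrm c}_{i,(k|t^+)}-\bar x_i^{\mathrm c,\ast}(t),\,\tilde u^{\mathrm c}_{i,(k|t^+)}-\bar u_i^{\mathrm c,\ast}(t)\bigr)
=\ell_i^{\mathrm c}\bigl(x^{\mathrm c,\ast}_{i,(k+1|t)}-\bar x_i^{\mathrm c,\ast}(t),\,u^{\mathrm c,\ast}_{i,(k+1|t)}-\bar u_i^{\mathrm c,\ast}(t)\bigr).
\]
These terms therefore reproduce the stages $k'=1,\dots,N_{\mathrm c}-1$ of the optimal sum at time $t$. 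For the final stage, $\tilde x^{\mathrm c}_{i,(N_{\mathrm c}-1|t^+)}=x^{\mathrm c,\ast}_{i,(N_{\mathrm c}|t)}=\bar x_i^{\mathrm c,\ast}(t)$ by \eqref{eq:app_terminal_eq}, and $\tilde u^{\mathrm c}_{i,(N_{\mathrm c}-1|t^+)}=\bar u_i^{\mathrm c,\ast}(t)$ by \eqref{eq:app_shift_u_terminal}. The final term is thus $\ell_i^{\mathrm c}(0,0)$, which equals $0$ by the positive definiteness in Assumption~\ref{ass:regularity_cost} (the upper bound $\overline{\alpha}_{\ell,i}(0)=0$ together with nonnegativity).

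Putting the pieces together,
\[
\tilde J_i^{\mathrm c}(t^+)=J_i^{\mathrm c,\ast}(t)-\ell_i^{\mathrm c}\bigl(x^{\mathrm c,\ast}_{i,(0|t)}-\bar x_i^{\mathrm c,\ast}(t),\,u^{\mathrm c,\ast}_{i,(0|t)}-\bar u_i^{\mathrm c,\ast}(t)\bigr).
\]
Using $x^{\mathrm c,\ast}_{i,(0|t)}=x_i(t)$ from \eqref{prob:local_fhocp_init} and $u^{\mathrm c,\ast}_{i,(0|t)}=u_i(t)$ from \eqref{eq:shared_input_applied}, the right-hand side is exactly $\hat J_i^{\mathrm c}(t^+)$ as defined in \eqref{eq:lyap_bound_update_main}.

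There is no substantive obstacle; the argument is bookkeeping. The one point to state explicitly is $\ell_i^{\mathrm c}(0,0)=0$. This relies on Assumption~\ref{ass:regularity_cost}, which is not among the hypotheses of Lemma~\ref{lem:appendix_shifted_candidate_feasible}, so the proof should either invoke it directly or note that $\ell_i^{\mathrm c}$ is assumed to vanish at the origin. One should also confirm that $(0,0)$ lies in the set $\overline{\mathcal Z}^{\mathrm e}_i$ on which the bounds hold.
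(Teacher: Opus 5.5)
Your proposal is correct and follows essentially the same argument as the paper. You match the candidate's stages $k=0,\dots,N_{\mathrm c}-2$ with the optimal stages $1,\dots,N_{\mathrm c}-1$, eliminate the final stage via $\ell_i^{\mathrm c}(0,0)=0$ from Assumption~\ref{ass:regularity_cost}(3), and identify the remaining difference with \eqref{eq:lyap_bound_update_main}. Your remark that this assumption is used beyond the stated hypotheses is accurate, and the paper's proof likewise invokes it directly.
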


\begin{proof}
By definition \eqref{eq:c_cost_def},
\begin{equation}
\begin{aligned}
\tilde J_i^{\mathrm c}(t^+)
=
\sum_{k=0}^{N_{\mathrm c}-1}
&\ell_i^{\mathrm c}\!\Big(
\tilde x_{i,(k|t^+)}^{\mathrm c}-\tilde{\bar x}_i^{\mathrm c}(t^+),
\tilde u_{i,(k|t^+)}^{\mathrm c}-\tilde{\bar u}_i^{\mathrm c}(t^+)
\Big)
\\
&\;+\;
V_i^{\mathrm c}\!\bigl(\tilde{\bar x}_i^{\mathrm c}(t^+),x_i^{\mathrm{ref}}\bigr).
\end{aligned}
\label{eq:app_shifted_candidate_cost}
\end{equation}
For $k=0,\dots,N_{\mathrm c}-2$,
\[
\tilde x_{i,(k|t^+)}^{\mathrm c}-\tilde{\bar x}_i^{\mathrm c}(t^+)
=
x_{i,(k+1|t)}^{\mathrm c,\ast}-\bar x_i^{\mathrm c,\ast}(t),
\]
\[
\tilde u_{i,(k|t^+)}^{\mathrm c}-\tilde{\bar u}_i^{\mathrm c}(t^+)
=
u_{i,(k+1|t)}^{\mathrm c,\ast}-\bar u_i^{\mathrm c,\ast}(t).
\]
For the last stage, \eqref{eq:app_shift_x}, \eqref{eq:app_shift_u_terminal}, and \eqref{eq:app_terminal_eq} imply
\[
\tilde x_{i,(N_{\mathrm c}-1|t^+)}^{\mathrm c}
=
x_{i,(N_{\mathrm c}|t)}^{\mathrm c,\ast}
=
\bar x_i^{\mathrm c,\ast}(t),
\ 
\tilde u_{i,(N_{\mathrm c}-1|t^+)}^{\mathrm c}
=
\bar u_i^{\mathrm c,\ast}(t),
\]
so the last stage contributes
\(
\ell_i^{\mathrm c}(0,0)=0
\)
by Assumption~\ref{ass:regularity_cost}(3). Hence
\begin{equation}
\begin{aligned}
\tilde J_i^{\mathrm c}(t^+)
=
\sum_{k=1}^{N_{\mathrm c}-1}
&\ell_i^{\mathrm c}\!\Big(
x_{i,(k|t)}^{\mathrm c,\ast}-\bar x_i^{\mathrm c,\ast}(t),
u_{i,(k|t)}^{\mathrm c,\ast}-\bar u_i^{\mathrm c,\ast}(t)
\Big)
\\
&\;+\;
V_i^{\mathrm c}\!\bigl(\bar x_i^{\mathrm c,\ast}(t),x_i^{\mathrm{ref}}\bigr).
\end{aligned}
\label{eq:app_shifted_cost_identity_pre}
\end{equation}
Comparing \eqref{eq:app_shifted_cost_identity_pre} with the optimal contingency cost at time $t$,
\begin{equation}
\begin{aligned}
J_i^{\mathrm c,\ast}(t)
=
\sum_{k=0}^{N_{\mathrm c}-1}
&\ell_i^{\mathrm c}\!\Big(
x_{i,(k|t)}^{\mathrm c,\ast}-\bar x_i^{\mathrm c,\ast}(t),
u_{i,(k|t)}^{\mathrm c,\ast}-\bar u_i^{\mathrm c,\ast}(t)
\Big)
\\
&\;+\;
V_i^{\mathrm c}\!\bigl(\bar x_i^{\mathrm c,\ast}(t),x_i^{\mathrm{ref}}\bigr),
\end{aligned}
\label{eq:app_opt_cost_t}
\end{equation}
we obtain
\begin{equation}
\tilde J_i^{\mathrm c}(t^+)
=
J_i^{\mathrm c,\ast}(t)
-
\ell_i^{\mathrm c}\!\Big(
x_i(t)-\bar x_i^{\mathrm c,\ast}(t),
u_i(t)-\bar u_i^{\mathrm c,\ast}(t)
\Big),
\label{eq:app_shifted_cost_identity}
\end{equation}
because $x_i(t)=x_{i,(0|t)}^{\mathrm c,\ast}$ and $u_i(t)=u_{i,(0|t)}^{\mathrm c,\ast}$.
By \eqref{eq:lyap_bound_update_main}, the right-hand side of \eqref{eq:app_shifted_cost_identity} is exactly $\hat J_i^{\mathrm c}(t^+)$, which proves Lemma \ref{lem:appendix_shifted_cost_identity}.
\end{proof}

\section{Proof of Lemma~\ref{lem:disjoint_invariance}}
\begin{proof}
The claim is shown by induction.
Pairwise disjointness at $t=0$ holds by Assumption~\ref{ass:initial_feasibility}.
Assume that
\[
S_i^\ast(t)\cap S_j^\ast(t)=\emptyset,
\qquad \forall i\neq j,
\]
holds at some time $t\in\mathbb Z_+$.
Fix arbitrary $i\neq j$ and consider the update to $t^+$.

\smallskip
\noindent\textbf{Case 1: $\chi_i(t)=1$ and $\chi_j(t)=1$.}
Then
\(
S_i^\ast(t^+)=S_i^\ast(t),
\ 
S_j^\ast(t^+)=S_j^\ast(t),
\)
and hence
\[
S_i^\ast(t^+)\cap S_j^\ast(t^+)=\emptyset.
\]
\smallskip
\noindent\textbf{Case 2: $\chi_i(t)=1$ and $\chi_j(t)=0$.}
Then
\(
S_i^\ast(t^+)=S_i^\ast(t),
\
S_j^\ast(t^+)=\tilde S_j(t^+).
\)
Since $\chi_j(t)=0$, definition \eqref{eq:freezeIndicator} implies
\[
\tilde S_j(t^+)\cap S_i^\ast(t)=\emptyset.
\]
Therefore,
\[
S_i^\ast(t^+)\cap S_j^\ast(t^+)=\emptyset.
\]

\smallskip
\noindent\textbf{Case 3: $\chi_i(t)=0$ and $\chi_j(t)=1$.}
Symmetric to Case 2.

\smallskip
\noindent\textbf{Case 4: $\chi_i(t)=0$ and $\chi_j(t)=0$.}
Then
\(
S_i^\ast(t^+)=\tilde S_i(t^+),
\
S_j^\ast(t^+)=\tilde S_j(t^+).
\)
Since $\chi_i(t)=0$, definition \eqref{eq:freezeIndicator} implies
\[
\tilde S_i(t^+)\cap \tilde S_j(t^+)=\emptyset.
\]
Hence,
\[
S_i^\ast(t^+)\cap S_j^\ast(t^+)=\emptyset.
\]

Since the argument holds for every pair $i\neq j$, the family
$\{S_p^\ast(t^+)\}_{p\in\mathcal I}$ is pairwise disjoint.
This completes the induction.
\end{proof}

\section{Proof of Theorem~\ref{thm:rec_feas}}

\begin{proof}
We show that feasibility at time $t$ implies feasibility at time $t^+$.
Fix an arbitrary agent $i\in\mathcal I$ and suppose that the local FHOCP~\eqref{prob:local_fhocp} is feasible at time $t$.

By Lemma~\ref{lem:appendix_shifted_candidate_feasible}, the shifted contingency candidate defined by
\eqref{eq:app_shift_x}--\eqref{eq:app_shift_bar} satisfies all contingency-part constraints of \eqref{prob:local_fhocp} at time $t^+$.
By Lemma~\ref{lem:appendix_shifted_cost_identity}, its contingency cost satisfies
\[
\tilde J_i^{\mathrm c}(t^+)=\hat J_i^{\mathrm c}(t^+),
\]
so the Lyapunov-type constraint \eqref{prob:local_fhocp_lyap} is also feasible at time $t^+$.

Hence a feasible contingency candidate exists at time $t^+$.
By Assumption~\ref{ass:nominal_completion}, this feasible contingency candidate admits a feasible nominal completion satisfying the remaining constraints of the local FHOCP, including the shared-first-input constraint \eqref{prob:local_fhocp_shared}.
Therefore, the full local FHOCP~\eqref{prob:local_fhocp} is feasible at time $t^+$.

Since feasibility holds at $t=0$ by Assumption~\ref{ass:initial_feasibility}, induction yields feasibility for all $t\in\mathbb Z_+$.
\end{proof}

\section{Proof of Theorem~\ref{thm:safety}}

\begin{proof}
By Theorem~\ref{thm:rec_feas}, the local FHOCP remains feasible for every agent and every time.
Therefore the safe-set containment constraint \eqref{prob:local_fhocp_containment} holds along the closed loop, and thus
\(
\mathcal B_i(t)\subseteq S_i^\ast(t),
\ \forall i\in\mathcal I,\ \forall t\in\mathbb Z_+.
\)
By Lemma~\ref{lem:disjoint_invariance}, the active safe sets remain pairwise disjoint:
\[
S_i^\ast(t)\cap S_j^\ast(t)=\emptyset,
\qquad \forall i\neq j,\ \forall t\in\mathbb Z_+.
\]
Hence, for any distinct agents $i\neq j$,
\[
\mathcal B_i(t)\subseteq S_i^\ast(t),
\qquad
\mathcal B_j(t)\subseteq S_j^\ast(t),
\]
it holds that
\[
\mathcal B_i(t)\cap \mathcal B_j(t)=\emptyset \qquad \forall i\neq j,\ \forall t\in\mathbb Z_+.
\]
\end{proof}

\section{Proof of Proposition~\ref{prop:lyap_decrease}}

\begin{proof}
Fix an arbitrary agent $i\in\mathcal I$ and time $t\in\mathbb Z_+$.
By recursive feasibility, the local FHOCP~\eqref{prob:local_fhocp} is feasible at both $t$ and $t^+$.
By Lemma~\ref{lem:appendix_shifted_candidate_feasible}, the shifted contingency candidate is feasible at time $t^+$, and by Lemma~\ref{lem:appendix_shifted_cost_identity},
\[
\tilde J_i^{\mathrm c}(t^+)=\hat J_i^{\mathrm c}(t^+).
\]
Therefore, optimality of the solution at time $t^+$ gives
\begin{equation}
J_i^{\mathrm c,\ast}(t^+)
\le
\tilde J_i^{\mathrm c}(t^+)
=
\hat J_i^{\mathrm c}(t^+).
\label{eq:app_optimality_upper_bound}
\end{equation}
Substituting the bound update \eqref{eq:lyap_bound_update_main} into \eqref{eq:app_optimality_upper_bound} yields
\[
J_i^{\mathrm c,\ast}(t^+)
\le
J_i^{\mathrm c,\ast}(t)
-
\ell_i^{\mathrm c}\!\bigl(e_i(t),\delta u_i(t)\bigr),
\]
which is exactly \eqref{eq:lyap_decrease_main}.
Since $\ell_i^{\mathrm c}\ge 0$, the sequence $\{J_i^{\mathrm c,\ast}(t)\}_{t\in\mathbb Z_+}$ is monotonically nonincreasing.
Moreover, $J_i^{\mathrm c,\ast}(t)\ge 0$ for all $t$, because \eqref{eq:c_cost_def} is the sum of the nonnegative stage cost and the nonnegative offset cost.
Hence $\{J_i^{\mathrm c,\ast}(t)\}$ is lower bounded and therefore convergent.

Summing \eqref{eq:lyap_decrease_main} from $t=0$ to $T-1$ gives
\begin{equation}
\sum_{t=0}^{T-1}
\ell_i^{\mathrm c}\!\bigl(e_i(t),\delta u_i(t)\bigr)
\le
J_i^{\mathrm c,\ast}(0)-J_i^{\mathrm c,\ast}(T)
\le
J_i^{\mathrm c,\ast}(0).
\label{eq:app_telescoping_sum}
\end{equation}
Letting $T\to\infty$ yields \eqref{eq:summable_stage_main}.
In particular, the nonnegative sequence
$\ell_i^{\mathrm c}\!\bigl(e_i(t),\delta u_i(t)\bigr)$ converges to zero.

Finally, by Assumption~\ref{ass:regularity_cost}(3), there exists
$\underline{\alpha}_{\ell,i}\in\mathcal K_\infty$ such that
\[
\underline{\alpha}_{\ell,i}\!\bigl(\|(e_i(t),\delta u_i(t))\|\bigr)
\le
\ell_i^{\mathrm c}\!\bigl(e_i(t),\delta u_i(t)\bigr),
\qquad \forall t\in\mathbb Z_+.
\]
Suppose, for contradiction, that $\|(e_i(t),\delta u_i(t))\|\not\to 0$.
Then there exist $\varepsilon>0$ and an infinite subsequence $\{t_m\}_{m\in\mathbb Z_+}$ such that
\[
\|(e_i(t_m),\delta u_i(t_m))\|\ge \varepsilon,
\qquad \forall m\in\mathbb Z_+.
\]
By strict monotonicity of $\underline{\alpha}_{\ell,i}$,
\[
\ell_i^{\mathrm c}\!\bigl(e_i(t_m),\delta u_i(t_m)\bigr)
\ge
\underline{\alpha}_{\ell,i}(\varepsilon)
>0,
\qquad \forall m\in\mathbb Z_+,
\]
which contradicts
\[
\ell_i^{\mathrm c}\!\bigl(e_i(t),\delta u_i(t)\bigr)\to 0.
\]
Therefore,
\(
\|(e_i(t),\delta u_i(t))\|\to 0
\qquad\text{as } t\to\infty.
\)
\end{proof}

\section{Proof of Corollary~\ref{cor:fixed_ss_convergence}}

\begin{proof}
By Proposition~\ref{prop:lyap_decrease},
\[
\|(e_i(t),\delta u_i(t))\|\to 0.
\]
Assume that there exists $\bar t\in\mathbb Z_+$ such that
\[
\bar x_i^{\mathrm c,\ast}(t)=\bar x_i^{\mathrm c,\star},
\qquad
\bar u_i^{\mathrm c,\ast}(t)=\bar u_i^{\mathrm c,\star},
\qquad \forall t\ge \bar t.
\]
Then, $\forall t\ge \bar t$,
\(
e_i(t)=x_i(t)-\bar x_i^{\mathrm c,\star},
\
\delta u_i(t)=u_i(t)-\bar u_i^{\mathrm c,\star}.
\)
Hence
\[
x_i(t)\to \bar x_i^{\mathrm c,\star},
\qquad
u_i(t)\to \bar u_i^{\mathrm c,\star}
\qquad\text{as } t\to\infty.
\]
\end{proof}
\section{Proof of Corollary~\ref{cor:pnp_preservation}}

\begin{proof}
We consider join and leave events separately.

\smallskip
\noindent\textbf{Join.}
Let \(i_{\mathrm{new}}\notin\mathcal I\) request insertion at time
\(t_{\mathrm{join}}\). By Assumption~\ref{ass:pp_join_feasible}, the joining
agent can select an initial active safe set
\(S_{i_{\mathrm{new}}}^\ast(t_{\mathrm{join}})\) that is disjoint from all
representations \(\hat S_j(t_{\mathrm{join}})\) of the currently active
agents. If \(\chi_j(t_{\mathrm{join}})=0\), this representation equals the
true active safe set. If \(\chi_j(t_{\mathrm{join}})=1\),
Lemma~\ref{lem:pp_outer_approx_frozen} gives
\(S_j^\ast(t_{\mathrm{join}})\subseteq \hat S_j(t_{\mathrm{join}})\).
Hence,
\[
S_{i_{\mathrm{new}}}^\ast(t_{\mathrm{join}})
\cap
S_j^\ast(t_{\mathrm{join}})
=
\emptyset,
\qquad \forall j\in\mathcal I .
\]
Together with the feasibility of the joining agent and Theorem~\ref{thm:rec_feas}
for the previously active agents, the augmented system satisfies the
initial feasibility and separation conditions at \(t_{\mathrm{join}}\).
Applying Theorems~\ref{thm:rec_feas} and~\ref{thm:safety} from
\(t_{\mathrm{join}}\) onward proves recursive feasibility and collision
avoidance for the augmented system.

\smallskip
\noindent\textbf{Leave.}
If an agent leaves, it is removed from the locally reconstructed
constraints of the remaining agents. This only deletes constraints and
therefore cannot invalidate any previously feasible candidate solution.
The active safe sets of the remaining agents remain pairwise disjoint,
so recursive feasibility and collision avoidance follow again from
Theorems~\ref{thm:rec_feas} and~\ref{thm:safety}.
\end{proof}

\bibliographystyle{plain}
\bibliography{autosam}

\end{document}